\numberwithin{equation}{section}
\newtheorem{theorem}{Theorem}[section]
\newtheorem{proposition}[theorem]{Proposition}
\newtheorem{example}[theorem]{Example}
\newtheorem{remark}[theorem]{Remark}
\newtheorem{lemma}[theorem]{Lemma}
\newtheorem{question}[theorem]{Question}
\newtheorem{definition}[theorem]{Definition}
\newtheorem{claim}{Claim}
\DeclareMathOperator{\ho}{Hom}
\DeclareMathOperator{\cl}{CL}
\begin{document}

{\it This paper has been accepted for publication in Topology and its Applications.}

\vspace{.5cm}

{\it Dedicated to Sergey Antonyan on the occasion of his 65th birthday.}

\vspace{8mm}

\title[Some notes on induced functions]{Some notes on induced functions and group actions on hyperspaces}

\author[V. DOnju\'an, N. Jonard-P\'erez and A. L\'opez-Poo]{Victor Donju\'an, Natalia Jonard-P\'erez and Ananda L\'opez-Poo}

\subjclass[2010]{54B20, 54C05, 54C35,  54H15, 57S05}

\keywords{Hyperspaces, Attouch-Wets Metric, Fell topology, Vietoris topology, Induced functions, Group action, Group topology}

\thanks{The first author has been supported by CONACyT grant 464720 (M\'exico). The second  and third authors have been supported by CONACyT grant 252849 (M\'exico) and by  PAPIIT grant IN115819  (UNAM, M\'exico).}

\address{Departamento de  Matem\'aticas,
Facultad de Ciencias, Universidad Nacional Aut\'onoma de M\'exico, 04510 Ciudad de M\'exico, M\'exico.}
\email{(V. Donju\'an) donjuan@ciencias.unam.mx}
\email{(N.\,Jonard-P\'erez) nat@ciencias.unam.mx}
\email{(A. L\'opez-Poo) ananda@ciencias.unam.mx}

\begin{abstract}

Let $X$ be a topological space and $\cl(X)$ be the family of all nonempty closed subsets of $X$.

In this paper we discuss the problem of when a continuous map between topological spaces induces a continuous function between their respective hyperspaces. 
As a main result we characterize the continuity of the induced function in the case of the Fell and Attouch-Wets hyperspaces.
Additionally we explore the problem of whether a continuous action of a topological group $G$ on a topological space $X$ induces a continuous action on $\cl(X)$. In particular we give sufficient conditions on the topology of $G$ to guarantee that the induced action on $\cl(X)$ is continuous, provided that $\cl(X)$ is equipped with the Hausdorff or the Attouch-Wets metric topology.  
\end{abstract}

\maketitle

\section{Introduction}

 In \cite{West}, J. West considered  hyperspaces of a group $G$, equipped with the natural action induced by the group. Since that paper, several results appear in the literature concerning group actions on hyperspaces (see e. g. \cite{S1, S2, S3,  natalia sergey, SNS, Jonard 2014, Jonard accepted}). Some older results such as \cite{Grumbaum, Kuchment, Kuchment E} also considered group actions on hyperspaces of convex bodies, but from a convex geometry perspective. 
 However, in the majority of the existing literature,  only group actions on  hyperspaces of compact sets (equipped with the topology induced by the Hausdorff metric) have been considered.  The reason behind this choice could be that in this case, the induced action is always continuous. Nevertheless, if we change the hyperspace or the given topology, then  the continuity of the induced action may fail.

Notice that in order to explore whether a group $G$ induces a continuous action on the hyperspace $\cl(X)$ of a $G$-space $X$,  we first need to solve a simpler question:  whether a continuous  map $f:X\to Y$ induces a continuous function $\widetilde f:\cl(X)\to \cl (Y)$ (see equation~(\ref{e:induced function}) for the precise definition of $\widetilde f$).   
Let us observe that in some contexts (such as dynamical systems \cite{banks, wang}) the induced function plays an important role, and therefore this problem has been long studied (see e.g. \cite{Hitchcock,michael, wang}). Until now, the continuity of $\widetilde f$ is clear if we endow the hyperspace $\cl(X)$ with the Vietoris or the Hausdorff metric topology. However, for other hypertopologies such as the Fell topology, there are only partial results. And, as far as we know, the case of the Attouch-Wets metric topology has not  even been approached yet. 

The aim of this paper is to discuss the problem of whether the induced function or the induced action becomes continuous. We will concentrate in four hypertopologies: the Vietoris topology, the Fell topology, the Hausdorff metric topology and the Attouch-Wets topology. In the first three cases, we will also consider their upper and lower versions (see Section~\ref{s: hyperspaces} for a more precise definition).

As a main result we will characterize when a continuous function $f:X\to Y$ between metric spaces induces a continuous function between the hyperspaces $\cl(X)$ and $\cl(Y)$, provided that these hyperspaces are  equipped with the Attouch-Wets metric topology.
We also characterize the case when $f$ is closed and $\cl(X)$ and $\cl(Y)$ are equipped with the Fell topology. 
With respect to the induced action, we will give sufficient conditions in order to guarantee that  if $G$ acts continuously on a metric space $X$ then the induced action on $\cl(X)$, equipped with the Hausdorff metric topology or the Attouch metric topology, is continuous.

The paper is organized as follows. In Section~\ref{s: hyperspaces} we recall the definition of the four hypertopologies that we will be working with and how  they relate to each other. 
  
  In Section \ref{s:induced function} we will concentrate on the problem of the induced function. After a brief remainder on what is  known about it, we will prove our results related with the induced function on hyperspaces equipped with the Fell and the Attouch-Wets topologies. 
  
  The problem of the induced action will be approached in Section~\ref{s:acciones}. We will start by presenting some easy remarks related with actions on hyperspaces equipped with the Fell and the Vietoris topologies. Then we will discuss the problem of the continuity of the induced action on the hyperspaces of closed  and  closed and bounded subsets of a metric space, equipped with the Hausdorff  and the Attouch-Wets metric topologies.

\section{Hyperspaces}\label{s: hyperspaces}

Unless we say otherwise, throughout the paper all topological spaces $X$  are assumed to be Hausdorff.

The family of all nonempty closed subsets of a topological space $X$ will be denoted by $\cl(X)$. On the other hand, let us denote by $\mathcal K(X)$ the family of all nonempty compact subsets of $X$. Since all our spaces are assumed to be Hausdorff, we have that $\mathcal K(X)\subset \cl(X)$.

\subsection{Vietoris and Fell topologies}

Let $Q$ be any subset of $X$ and let us consider
$$Q^-=\{A\in\cl(X): A\cap Q\neq\emptyset\}$$
$$Q^+=\{A\in\cl(X): A\subset Q\}.$$

\begin{definition}\label{d:vietoris}
\begin{enumerate}
    \item The lower Vietoris topology on $\cl(X)$, denoted by $\tau_{V^-}$,
     is the topology generated by all sets of the form $U^-$, where $U$ is an open subset of $X$. 
    \item The upper Vietoris topology on $\cl(X)$, denoted by $\tau_{V^+}$, is the topology generated by all sets of the form $U^+$, where $U$ is an open subset of $X$.
    \item The Vietoris topology on $\cl(X)$, denoted by $\tau_V$, is the topology on  $\cl(X)$ generated by $\tau_{V^-}$ and $\tau_{V^+}$. 
\end{enumerate}

\end{definition}

We use the notation 
    $\cl_{V^-}(X)$ to denote the hyperspace $(\cl(X), \tau_{V^-})$. The subset  $\mathcal K(X)$
    of $\cl_{V^{-}}(X)$, equipped with the subspace topology, will be denoted by  $\mathcal K_{V^-}(X)$.
    Following the same logic, we define the hyperspaces 
 $\cl_{V^+}(X)$, $\cl_{V}(X)$, $\mathcal K_{V^+}(X)$ and $\mathcal K_{V}(X)$.

\begin{definition}\label{d:Fell}
\begin{enumerate}
     \item The upper Fell topology on $\cl(X)$, denoted by $\tau_{F^+}$, is the topology generated by all sets of the form $(X\setminus K)^+$, where $K$ is a compact subset of $X$.
     \item The Fell topology on $\cl(X)$,  is the topology generated by $\tau_{V^-}$ and $\tau_{F^+}$
\end{enumerate}
\end{definition}

As we do with the Vietoris topology, we denote by 
   $\cl_{F^+}(X)$ and $\cl_{F}(X)$ the hyperspaces  $(\cl(X),\tau_{F^{+}})$ and $(\cl(X),\tau_{F})$, respectively. Similarly we define $\mathcal K_{F^+}(X)$ and $\mathcal K_{F}(X)$.

Clearly, if $X$ is a Hausdorff space, the Fell topology is weaker than the Vietoris topology. 
However, in general these topologies are not the same, not even on $\mathcal K(X)$.

It is well known that $\cl_V(X)$ and $\cl_{F}(X)$ are $T_1$-spaces, provided that $X$ is a $T_1$-space (see e.g. \cite{michael}). However, in general, the hyperspaces $\cl_{V^-}(X)$, $\cl_{V^+}(X)$ and $\cl_{F^+}(X)$  are never  $T_1$, no matter what good topological properties the space $X$ has.  Yet, $\cl_{V^-}(X)$ and $\cl_{F^+}(X)$ are always $T_0$, even if $X$ does not satisfy any separation axiom, while  $\cl_{V^+}(X)$ is $T_0$, provided that  $X$ is a $T_1$-space.

Even if these weaker topologies may seem very poor, if we  restrict them to a specific family of closed sets,  they can become very useful and rich.
For example, if we consider the family of all $k$-dimensional subspaces of $\mathbb R^n$ equipped with the lower Vietoris topology, we obtain a compact smooth manifold: the Grassmann manifold $Gr(k, \mathbb R^n)$   (see e.g.            \cite{Resende-Santos}).
On the other hand, the hyperspace of all closed and convex subsets of a Banach space equipped with the lower Vietoris topology appears in many selection theorems, such as the classical Michael's Selection Theorem. The upper Vietoris topology also appears in several selection theorems (see, e.g.\cite{Repovs-Semenov}).
After all these examples, we found it was interesting to approach our goal by taking into account the role played  by each of these weaker topologies.

\subsection{Metric topologies}\label{ss:metric topologies}

If  $(X,d)$ is a metric space, we denote by $B(x,r)$ the open ball with center $x\in X$ and radius $r>0$.  Namely, $B(x,r)=\{y\in X\mid d(y,x)<r\}$. In this case, for any subset $A\subset X$, we denote by $N_r(A)$ the set of points $r$-closed to $A$, i.e;
$$N_r(A)=\{x\in X\mid d(x,A)<r\}.$$

For  a metric space  $(X,d)$, there are two well-known metrics on $\cl(X)$: the Hausdorff metric and the Attouch-Wets metric. 
For any $A,B\subset X$, consider the value
$$e(A, B)=\sup\{d(a, B)\mid a\in A\}=\inf\{t>0\mid A\subset N_t(B)\}.$$

The value $e(A,B)$ is called the excess of $A$ over $B$. It is not difficult to see that $e(A,B)=e(\overline{A},\overline{B})$ for any $A,B\subset X$, and it defines a quasipseudometric $e:\cl(X)\times\cl(X)\to [0,\infty]$ (see \cite[Section 1.5]{beer book}). 
The Hausdorff metric on $\cl(X)$ is defined 
  for every$A,B\in \cl(X)$, as  $$d_H(A,B)=\max\left\{e(A,B), e(B,A)\right\}.$$

On the other hand, we define the Attouch-Wets metric on $\cl(X)$ as  in~\cite{Sakai-Yaguchi}:
\begin{equation}\label{e:AW}
    d_{AW}(A,B)=\sup_{j\in\mathbb{N}}\left\{ \min\left\{{\frac 1j, \sup_{d(x_0,x)<j}|d(x,A)-d(x,B)|}\right\}\right\}
\end{equation}
where $x_0\in X$ is fixed. The topology generated by $d_{AW}$ does not depend on the point $x_0$ and this metric is equivalent to the Attouch-Wets metric  defined in \cite[Definition 3.1.2]{beer book}.
The reason why we prefer to use $d_{AW}$ as stated in formula~(\ref{e:AW}) is the ease for estimating the distance between two sets, as we can see in the following lemma. 
\begin{lemma} \label{aw}
Let $\left(X,d\right)$ be a metric space, $x_{0}\in X$ and  $\varepsilon\in (0,1)$. If $j\in \mathbb{N}$ is such that $\frac{1}{j+1}<\varepsilon\leq \frac{1}{j}$  then
\[
d_{AW}\left(A,B\right)<\varepsilon \mbox{ } \mbox{ if and only if } \mbox{ } \sup_{d\left(x,x_{0}\right)<j}\left|d\left(x,A\right)-d\left(x,B\right)\right|<\varepsilon,
\]
for all $A,B\in \textup{CL}(X)$, where $d_{AW}$ is based at the point $x_0$.
\end{lemma}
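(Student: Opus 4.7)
The plan is to unpack the two-level supremum defining $d_{AW}$ and exploit the hypothesis $\frac{1}{j+1} < \varepsilon \leq \frac{1}{j}$ to compare the inner suprema at different radii. Introduce the abbreviations
\[
s_k = \sup_{d(x_0,x)<k}|d(x,A)-d(x,B)|, \qquad m_k = \min\{1/k, s_k\},
\]
so that $d_{AW}(A,B) = \sup_{k\in\mathbb N} m_k$, and note the obvious monotonicity $s_k \leq s_{k+1}$ since the ball of radius $k$ is contained in the ball of radius $k+1$.

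For the forward implication, assume $d_{AW}(A,B) < \varepsilon$. Then in particular $m_j < \varepsilon$. The hypothesis gives $1/j \geq \varepsilon$, so if we had $s_j \geq \varepsilon$, then $m_j = \min\{1/j, s_j\} \geq \varepsilon$, contradicting $m_j < \varepsilon$. Hence $s_j < \varepsilon$, which is exactly the claimed inner inequality.

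For the converse, suppose $s_j < \varepsilon$. I would split the index set into $k \leq j$ and $k \geq j+1$. For $k \leq j$, monotonicity yields $m_k \leq s_k \leq s_j < \varepsilon$. For $k \geq j+1$, the hypothesis $\frac{1}{j+1} < \varepsilon$ gives $m_k \leq 1/k \leq 1/(j+1) < \varepsilon$. Therefore
\[
d_{AW}(A,B) = \sup_{k\in\mathbb N} m_k \leq \max\{s_j, 1/(j+1)\} < \varepsilon,
\]
proving the reverse direction.

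The only subtle point, and the main potential pitfall, is turning pointwise bounds $m_k < \varepsilon$ for each $k$ into the strict inequality $\sup_k m_k < \varepsilon$ needed for $d_{AW}(A,B)<\varepsilon$. This would fail in general, but here it works because both families of bounds ($s_j$ for small $k$, and $1/(j+1)$ for large $k$) are actually single numbers strictly smaller than $\varepsilon$, giving a uniform strict bound on the supremum. Everything else is a direct unravelling of definitions and use of the two inequalities defining $j$.
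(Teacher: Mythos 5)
Your proof is correct, and it is exactly the direct unravelling of the definition of $d_{AW}$ that the paper has in mind (the authors omit the proof precisely because it "follows directly from the definition"); your handling of the strictness of the supremum via the two uniform bounds $s_j$ and $1/(j+1)$ is the only point requiring care, and you address it properly.
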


We omit the proof of this lemma, since it follows directly from the definition of $d_{AW}$.

 The Hausdorff metric topology and the Attouch-Wets metric topology on $\cl(X)$ are the topologies generated by the metrics $d_H$ and $d_{AW}$, respectively. These spaces are denoted by $\cl_H(X)$ and $\cl_{AW}(X)$.


It is well known that the Hausdorff topology and the Vietoris topology coincide on $\mathcal K(X)$ (where $X$ is a metric space). However, the relation between these two topologies can be expressed in terms of the lower and upper Vietoris topology.

In order to see this, first let us observe that  the Hausdorff topology is also the supremum of two weaker topologies: the \textit{lower Hausdorff topology} (generated by the 
 cuasipseudometric $d_{H^-}:\cl(X)\times\cl(X)\to[0,\infty]$ defined, for every $A, C\in \cl(X)$,  by
$d_{H^-}(A,C)=e(A,C)$) and the \textit{upper Hausdorff topology} (generated by the 
 cuasipseudometric
 $d_{H^+}:\cl(X)\times\cl(X)\to[0,\infty]$ defined, for every $A, C\in \cl(X)$,  by
$d_{H^+}(A,C)=e(C,A)$).

The reason why  these topologies  are called lower and upper, is justified in the following  proposition.

\begin{proposition}\label{p: contentions vietoris Hausdorff}
Let $(X,d)$ be a metric space. 
\begin{enumerate}[\rm(1)]
    \item The lower Vietoris topology on $\cl(X)$ is weaker than the lower Hausdorff topology. 
    \item The upper Hausdorff topology on $\cl(X)$ is weaker than the upper Vietoris topology.
    \item The lower Hausdorff topology on $\mathcal K(X)$ coincides with  the lower Vietoris topology.
    \item The upper Hausdorff topology on $\mathcal K(X)$ coincides with the upper Vietoris topology.
    \item The Hausdorff topology on $\mathcal K(X)$ coincides with the Vietoris topology. 
\end{enumerate}
\end{proposition}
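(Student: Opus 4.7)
\medskip

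\noindent\textbf{Proof plan.} All five items will follow from direct arguments with the defining (sub)basic open sets, so the plan is simply to record, for each inclusion, a neighborhood comparison.

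For item (1), to show $\tau_{V^-}\subset$ lower Hausdorff on $\cl(X)$, I would take a subbasic element $U^-$ with $U\subset X$ open and $A\in U^-$. Picking $a\in A\cap U$ and $r>0$ with $B(a,r)\subset U$, any $C\in\cl(X)$ with $e(A,C)<r$ contains a point $c$ with $d(a,c)<r$, hence $c\in U$, so $C\in U^-$. This shows the lower-Hausdorff ball $\{C:d_{H^-}(A,C)<r\}$ sits inside $U^-$.

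For item (2), a basic open neighborhood of $A$ in the upper Hausdorff topology is $\{C:e(C,A)<r\}=\{C:C\subset N_r(A)\}=(N_r(A))^+$, and since $N_r(A)$ is open, this is already a basic upper Vietoris neighborhood. So every upper-Hausdorff open set is automatically upper-Vietoris open. Here I would remark that $A\subset N_r(A)$ always, so $A$ does belong to its own basic neighborhood.

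The real content sits in (3) and (4), where compactness is used. For (3), given $A\in\mathcal K(X)$ and $r>0$, I would cover $A$ by finitely many balls $B(a_i,r/3)$ with $a_i\in A$, and set $U_i=B(a_i,r/3)$. Then $A\in\bigcap_{i=1}^n U_i^-$; and if $C\in\bigcap_{i=1}^n U_i^-$, choosing $c_i\in C\cap U_i$ and, for any $a\in A$, an index $i$ with $a\in B(a_i,r/3)$, the triangle inequality yields $d(a,C)\le d(a,c_i)<2r/3<r$, so $e(A,C)<r$. This is the delicate step because one must pick the radius small enough to obtain a strict inequality; I would single this out as the only nontrivial calculation. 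For (4), I would take $U\subset X$ open, $A\in U^+\cap\mathcal K(X)$, and use compactness of $A$ and closedness of $X\setminus U$ to get $r:=d(A,X\setminus U)>0$; then $N_r(A)\subset U$, so $\{C:e(C,A)<r\}\subset U^+$, giving the desired upper-Hausdorff neighborhood inside $U^+$.

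Finally, (5) is immediate: $\tau_V$ is the supremum of $\tau_{V^-}$ and $\tau_{V^+}$, the Hausdorff topology is the supremum of its lower and upper parts, and by (3) and (4) the corresponding halves coincide on $\mathcal K(X)$. The main obstacle, as noted, is the combinatorial choice of radii in (3); everything else reduces to rewriting balls of $d_{H^\pm}$ as the sets $U^\pm$ or $(N_r(A))^+$.
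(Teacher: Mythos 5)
The paper does not actually prove this proposition; it simply cites Berthiaume (Lemma~2.4 and Theorem~2.5 of \cite{BERTHIAUME}), where the statements are obtained in the more general quasi-uniform setting. Your proposal therefore takes a different, self-contained route: elementary neighborhood comparisons with the subbasic sets $U^-$, $U^+$ and the balls of the quasipseudometrics $d_{H^\pm}$, using compactness only in (3) and (4) and deducing (5) from the fact that both topologies are suprema of their lower and upper halves. Items (1), (3), (4) and (5) are argued correctly: in (1) the ball $\{C:e(A,C)<r\}$ does lie in $U^-$; in (3) the $r/3$-covering gives $e(A,C)\le 2r/3<r$; in (4) compactness of $A$ yields $r=d(A,X\setminus U)>0$ and $\{C:e(C,A)<r\}\subset (N_r(A))^+\subset U^+$ (you may wish to note the trivial case $U=X$ separately, where $X\setminus U=\emptyset$).

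There is, however, a genuine slip in (2). The claimed equality $\{C:e(C,A)<r\}=(N_r(A))^+$ is false in general: only the inclusion $\{C:e(C,A)<r\}\subset (N_r(A))^+$ holds, since $C\subset N_r(A)$ forces $d(c,A)<r$ pointwise but allows $\sup_{c\in C}d(c,A)=r$. For instance, in $X=\{0\}\cup\{1-\tfrac1n : n\ge 2\}\subset\mathbb R$ with $A=\{0\}$ and $C=X\setminus\{0\}$, one has $C\in (N_1(A))^+$ closed in $X$, yet $e(C,A)=1$. Consequently your argument exhibits a Vietoris-open set \emph{containing} the upper-Hausdorff ball, which is the wrong direction for showing $\tau_{H^+}\subset\tau_{V^+}$: you need an upper-Vietoris neighborhood of $A$ \emph{inside} the ball. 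The repair is one line: take $W=(N_{r/2}(A))^+$; then $A\in W$ and $C\subset N_{r/2}(A)$ gives $e(C,A)\le r/2<r$, so $W\subset\{C:e(C,A)<r\}$. With this correction the whole proof stands, and it is arguably preferable to the paper's bare citation because it is elementary and keeps the argument inside the metric setting actually used later.
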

For the proof of these properties we refer the reader to \cite[Lemma 2.4 and Theorem 2.5]{BERTHIAUME}
(see also \cite{Rodriguez-Romaguera}).

\section{Continuity of the induced function} \label{s:induced function}

Let $X$ and $Y$ be topological spaces. For every function  $f:X\to Y$, let us denote by $\widetilde f$ the \textit{induced function} $\widetilde{f}:\cl(X)\to\cl(Y)$ defined by
\begin{equation}\label{e:induced function}
    A\mapsto\overline{f(A)}.
\end{equation}

It is well known that if $f$ is continuous, $X$ and $Y$ are compact metric spaces, and if $\cl(X)$ and $\cl(Y)$ are endowed with the Hausdorff topology (which, in this case, coincides with the Fell and the Vietoris topologies), then the induced function $\widetilde{f}$ is always continuous (see for instance \cite{nadler}).

A similar situation occurs when we consider the hyperspaces $\mathcal K(X)$ and $\mathcal K(Y)$ endowed with the Vietoris topology (which, in this case,  coincides with the Hausdorff topology).

In the following proposition we summarize these and other  well-known facts regarding the continuity of the induced function.

\begin{proposition}\label{p:induced vietoris proposition}
Let $f:X\to Y$ be a function between topological spaces. 
\begin{enumerate}[\rm(1)]
    \item  $f$ is continuous if and only if  $\widetilde{f}:\cl_{V^-}(X)\to\cl_{V^-}(Y)$ is continuous.
 
     \item If $Y$ is a normal space and $f$ is continuous, then $\widetilde{f}:\cl_{V^+}(X)\to\cl_{V^+}(Y)$ is continuous.
     \item If $Y$ is a normal space and $f$ is continuous, then $\widetilde{f}:\cl_{V}(X)\to\cl_{V}(Y)$ is continuous.
  
     \item If $f$ is continuous then $\widetilde{f}:\mathcal{K}_{V^+}(X)\to\mathcal{K}_{V^+}(Y)$ is continuous.
     \item If $f$ is continuous then $\widetilde{f}:\mathcal{K}_{V}(X)\to\mathcal{K}_{V}(Y)$ is continuous.
\end{enumerate}
\end{proposition}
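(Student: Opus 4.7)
The plan is to verify each case by examining inverse images of subbasic open sets of the form $U^-$ or $U^+$. The key recurring identity is that for any open $U \subset Y$ and any $A \in \cl(X)$, one has $f(A) \cap U \neq \emptyset$ if and only if $\overline{f(A)} \cap U \neq \emptyset$. Using this, for part (1) I would establish the cleaner identity $\widetilde{f}^{-1}(U^-) = (f^{-1}(U))^-$ for every open $U \subset Y$; the forward direction then follows because continuity of $f$ makes $f^{-1}(U)$ open, so $(f^{-1}(U))^-$ is subbasic in $\cl_{V^-}(X)$. For the converse, assuming $\widetilde{f}$ is continuous, I would test with singletons: given $x \in f^{-1}(U)$, pick open $V_1, \dots, V_n \subset X$ with $\{x\} \in V_1^- \cap \cdots \cap V_n^- \subset \widetilde{f}^{-1}(U^-)$; evaluating this inclusion on singletons $\{y\}$ shows $V_1 \cap \cdots \cap V_n \subset f^{-1}(U)$, which gives openness.

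For part (2), given $A_0$ with $\overline{f(A_0)} \subset U$, I would invoke normality of $Y$ to pick an open $W \subset Y$ with $\overline{f(A_0)} \subset W \subset \overline{W} \subset U$. Then $V := f^{-1}(W)$ is open, contains $A_0$ (since $f(A_0) \subset W$), and every $A \in V^+$ satisfies $\overline{f(A)} \subset \overline{W} \subset U$, so $V^+ \subset \widetilde{f}^{-1}(U^+)$ provides the desired neighborhood. Part (3) is then immediate from (1) and (2), since $\tau_V$ is generated by $\tau_{V^-}$ and $\tau_{V^+}$, and the preimage of a subbase is a subbase.

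For parts (4) and (5), the key simplification is that a continuous image of a compact set in a Hausdorff space is already closed, so $\widetilde{f}(A) = f(A)$ for every $A \in \mathcal K(X)$. This removes the need for normality: for open $U \subset Y$,
\[
\widetilde{f}^{-1}(U^+) \cap \mathcal K(X) = \{A \in \mathcal K(X) \mid A \subset f^{-1}(U)\} = \mathcal K(X) \cap (f^{-1}(U))^+,
\]
which is open in $\mathcal K_{V^+}(X)$, proving (4); part (5) follows by combining this with the restriction of (1) to $\mathcal K(X)$. The main technical point I anticipate is part (2), where the normality hypothesis on $Y$ is essential to shrink $U$ to an open set whose closure still lies inside $U$; without it, $f^{-1}(U)$ would still be an open set containing $A_0$, but closures of $f(A)$ for $A$ inside this preimage could escape $U$, and the argument would collapse. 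Everything else amounts to a careful unpacking of the definitions.
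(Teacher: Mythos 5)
Your proof is correct, and it is essentially the standard argument: the paper itself omits the proof and simply refers to Section 5 of Michael's \emph{Topologies on spaces of subsets}, where the same identities $\widetilde{f}^{-1}(U^-)=(f^{-1}(U))^-$, the normality-based shrinking for the upper Vietoris case, and the observation that $\widetilde f(A)=f(A)$ on compacta are used. One cosmetic point: in part (3) say that continuity may be checked on subbasic open sets (the preimage of a subbase need not itself be a subbase), which is what your argument actually uses.
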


Even if it is not stated as we did, the reader can find a proof of these properties in \cite[Section 5]{michael}.


The continuity  of the induced function with respect to the Hausdorff metric was  explored in \cite[Theorem 3]{Hitchcock}, where  Hitchcock proves (in a more general context) that a map $f:X\to Y$ is uniformly continuous if and only if the induced map $\widetilde{f}:\cl(X)\to\cl(Y)$ is uniformly continuous with respect to the lower Hausdorff topology, the upper Hausdorff topology or the Hausdorff metric topology. He also characterizes the continuity of the induced function, in terms of certain properties of the map $f$ (\cite[Theorem 2]{Hitchcock}).

Before passing to the main results of this section, let us notice that in the case of  the  family of closed and bounded subsets of  a metric space,  the continuity of the induced function can be guaranteed by a weaker condition: uniform continuity on bounded sets.

Recall that a map $f:X\to Y$ between metric spaces $(X,d)$ and $(Y,d')$ is \textit{uniformly continuous on the  set $A\subset X$} if   for every $\varepsilon >0$, there exists $\delta>0$ such that if $a\in A$, $x\in X$ and $d(a,x)<\delta$, then $d'(f(a), f(x))<\varepsilon$ (\cite{beer di Concilio}). If $f$ is uniformly continuous on every bounded set $A\subset X$ we say that $f$ is \textit{uniformly continuous on bounded sets}. 

For a metric space $(X,d)$, let us denote by $BC(X)$ the family of all closed and bounded subsets of $X$.
At this point, the meaning of the  symbols 
$BC_{H}(X)$, $BC_{H^-}(X)$ and $BC_{H^+}(X)$ must be clear.


Observe that in general, the induced function $\widetilde f:BC(X)\to BC(Y)$ may not be well defined, since the image of a bounded set could be non bounded. In order to avoid this situation, we need  the underlying map $f:X\to Y$ to be boundedness preserving.  A  map $f:X\to Y$ between metric spaces is \textit{boundedness preserving} if $f(A)$ is bounded in $Y$ for every bounded set $A\subset X$.

\begin{proposition}\label{prop induced hausdorff} Let $(X,d)$ and $(Y,d')$ be metric spaces. If $f:X\to Y$ is a
 boundedness preserving and uniformly continuous on bounded sets map, then the following induced functions are  continuous:

\begin{enumerate}[\rm(1)]
    \item $\widetilde{f}:BC_{H^-}(X)\to BC_{H^-}(Y)$,
    \item $\widetilde{f}:BC_{H^+}(X)\to BC_{H^+}(Y)$,
    \item $\widetilde{f}:BC_H(X)\to BC_H(Y)$.
\end{enumerate}

\end{proposition}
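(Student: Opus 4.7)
The plan is to reduce the problem to showing that the excess $e(f(A),f(B))$ (respectively $e(f(B),f(A))$) is small whenever $A$ and $B$ are appropriately close, using the elementary identity $e(\overline{C},\overline{D})=e(C,D)$ so that closures can be ignored when computing excesses. Well-definedness of $\widetilde f$ on $BC(X)$ is immediate from the boundedness preserving hypothesis, since $\overline{f(A)}$ is closed and bounded whenever $A$ is. The three items will be handled in order, with (3) following by combining the estimates from (1) and (2).

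For part (1), fix $A\in BC(X)$ and $\varepsilon>0$. Since $A$ is bounded, uniform continuity on bounded sets provides $\delta>0$ such that for every $a\in A$ and every $x\in X$ with $d(a,x)<\delta$ we have $d'(f(a),f(x))<\varepsilon$. Given $B\in BC(X)$ with $e(A,B)<\delta$, for each $a\in A$ one picks $b\in B$ with $d(a,b)<\delta$ and concludes $d'(f(a),f(B))\leq d'(f(a),f(b))<\varepsilon$. Taking the supremum over $a$ and using $e(\overline{f(A)},\overline{f(B)})=e(f(A),f(B))$ gives $d_{H^-}(\widetilde f(A),\widetilde f(B))\leq\varepsilon$, so $\widetilde f$ is continuous at $A$.

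For part (2), the subtlety is that the relevant uniform continuity must be applicable at points of $B$, not just of $A$. The fix is to enlarge $A$ by a fixed unit neighborhood: set $A^\ast=\{x\in X\mid d(x,A)\leq 1\}$, which is bounded because $A$ is. Given $\varepsilon>0$, uniform continuity of $f$ on $A^\ast$ yields $\delta\in(0,1)$ such that any $x\in A^\ast$ and $y\in X$ with $d(x,y)<\delta$ satisfy $d'(f(x),f(y))<\varepsilon$. Now if $B\in BC(X)$ satisfies $e(B,A)<\delta$, then every $b\in B$ lies in $A^\ast$ and has some $a\in A\subset A^\ast$ with $d(b,a)<\delta$, so $d'(f(b),f(a))<\varepsilon$, and hence $e(f(B),f(A))\leq\varepsilon$. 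Again invoking $e(\overline C,\overline D)=e(C,D)$ yields $d_{H^+}(\widetilde f(A),\widetilde f(B))\leq\varepsilon$.

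For part (3), one simply notes that $d_H=\max\{d_{H^-},d_{H^+}\}$, so choosing $\delta$ to be the minimum of the two $\delta$'s produced in parts (1) and (2) forces $d_H(\widetilde f(A),\widetilde f(B))\leq\varepsilon$ whenever $d_H(A,B)<\delta$. The main (mild) obstacle throughout is the asymmetry of $e$: in part (1) the reference set $A$ is the one where uniform continuity is controlled, whereas in part (2) one must instead control $f$ on a bounded set containing both $A$ and every admissible $B$, which is precisely what the enlargement $A^\ast$ accomplishes.
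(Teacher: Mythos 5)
Your proof is correct and follows essentially the same route as the paper's, which proves (1) exactly as you do, declares (2) ``similar'' and omits it, and deduces (3) from $d_H=\max\{d_{H^-},d_{H^+}\}$. The only deviation is your enlargement $A^\ast$ in (2), which is harmless but unnecessary: since uniform continuity on the bounded set $A$ (as defined in the paper) already allows one point in $A$ and the other anywhere in $X$, taking $a\in A$ with $d(a,b)<\delta$ for a given $b\in B$ immediately yields $d'\bigl(f(b),f(A)\bigr)\leq d'\bigl(f(a),f(b)\bigr)<\varepsilon$, so no auxiliary bounded set containing $B$ is needed.
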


\begin{proof}
(1) Let $ A\in BC(X)$ and $\varepsilon >0$. Since $f$ is uniformly continuous on the bounded set $A$, there exists $\delta>0$ such that $d'(f(a),f(b))<\varepsilon$ for every $a\in A$ and $b\in X$ satisfying $d(a,b)<\delta$.  Suppose that $d_{H^-}(A,B)<\delta$. Fix $a\in A$. Since $d_{H^-}(A,B)<\delta$, there exists $b\in B$ such that $d(a,b)<\delta$. Hence, $d'(f(a),f(B))\leq d'(f(a),f(b))<\varepsilon$. Since $a\in A$ was chosen arbitrarily, we can conclude that $$d'_{H^-}\big(\overline{f(A)},\overline{f(B)}\big)=d'_{H^-}(f(A),f(B))=\sup_{a\in A}d'(f(a),f(B))\le\varepsilon.$$

(2) Since the proof is similar to the previous one, we omit the details. 

(3) Since $d_H=\max\{d_{H^-},d_{H^+}\}$, this follows directly by combining  (1) and (2).
\end{proof}

In the following example we show that the hypothesis of $f$ being uniformly continuous on bounded sets is essential in Proposition \ref{prop induced hausdorff}.

\begin{example}
Let $X=(0,1)$ and $Y=[-1,1]$, both equipped with the Euclidean distance.
Consider the map $f:X\to Y$ given by $f(x)=\sin (1/x)$.
We claim that $\widetilde f: BC_H(X)\to BC_H(Y)$ is not continuous.
Indeed, consider the set $A=\{1/(2\pi n)\mid n\in\mathbb N\}$. Clearly $A$ is closed in $X$ and $\widetilde f(A)=\{0\}$.
Define  $A_k\subset X$ as
$$A_k:=A\cup \left [ \frac{1}{2\pi(k+1)},\frac{1}{2\pi k} \right].$$
Observe that $d_H(A_k, A)\to 0$ but $\widetilde f(A_k)=[-1,1]\not \to \widetilde f(A)$. Thus, $\widetilde f$ is not  continuous.

\end{example}

\subsection{Continuity of the induced function with respect to the Fell topology}

The continuity of the induced function with respect to the Fell topology has been studied in some specific contexts.  For instance, in \cite{wang} the authors  consider the case of a continuous map $f:X\to X$ where $X$ is a Hausdorff, locally compact and second countable space, and they proved that  the induced function $\widetilde f$ is continuous, provided that $f$ is perfect (namely, $f$ is closed and every fiber $f^{-1}(y)$ is compact). They also give conditions on the map $f$ in order that the continuity of the induced function implies that $f$ is perfect. 
For the proof of their results, the authors of \cite{wang} used strongly the hypothesis of $X$ being second countable (and hence, metrizable).

In the following theorem we present a more general scenario where the induced function $\widetilde f:\cl_{F}(X)\to\cl_{F}(Y)$  becomes continuous. Also, we characterize the continuity of $\widetilde f$ in the case that $f$ is a closed map.  

\begin{theorem}\label{t:induced fell}
Let $f:X\to Y$ be a continuous map between Hausdorff spaces.
\begin{enumerate}[\rm(1)]
    \item If $Y$ is locally compact and $f$ is proper, then $\widetilde{f}:\cl_{F^+}(X)\to\cl_{F^+}(Y)$ is continuous. In particular $\widetilde{f}:\cl_{F}(X)\to\cl_{F}(Y)$ is continuous.
    \item If $f$ is perfect, then $\widetilde{f}:\cl_{F^+}(X)\to\cl_{F^+}(Y)$ is continuous. In particular $\widetilde{f}:\cl_{F}(X)\to\cl_{F}(Y)$ is continuous.
    \item If $f$ is closed and non constant and $\widetilde{f}:\cl_{F}(X)\to\cl_{F}(Y)$ is continuous, then $f^{-1}(y)$ is compact for every $y\in f(X)$ (namely, $f$ is perfect).
    \item If $f$ is closed, $\widetilde{f}:\cl_{F}(X)\to\cl_{F}(Y)$ is continuous if and only if $f$ is perfect or constant.
\end{enumerate}
\end{theorem}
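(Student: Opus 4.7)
My plan is to handle items (1) and (2) in parallel, tackle (3) separately, and then combine them for (4). The core computation in all parts will rest on the identity
\[
\widetilde{f}^{-1}\bigl((Y\setminus K)^+\bigr) \;=\; \bigl(X\setminus f^{-1}(K)\bigr)^+
\]
valid for every compact $K\subset Y$ whenever $f$ is a closed map (so that $\overline{f(A)}=f(A)$ for every closed $A\subset X$). For (2), closedness of $f$ is built into the definition of \emph{perfect}, and pulling back compact sets to compact sets is the standard preimage property of perfect maps. For (1), I will first verify that a proper map into a locally compact Hausdorff space is automatically closed: given closed $C\subset X$ and $y\in\overline{f(C)}$, a compact neighborhood $K$ of $y$ satisfies $f(C)\cap K=f(C\cap f^{-1}(K))$, which is compact (hence closed) and picks up $y$ from any small neighborhood of $y$ contained in $K$. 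Once the displayed identity is in place, its right-hand side is a basic $\tau_{F^+}$-open set, proving continuity into $\cl_{F^+}$; continuity into $\cl_F$ then follows by combining this with Proposition~\ref{p:induced vietoris proposition}(1), which already takes care of the $\tau_{V^-}$-generators.

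For (3), I will fix $y\in f(X)$, set $F=f^{-1}(y)$, and apply the same identity with $K=\{y\}$ to conclude that $(X\setminus F)^+$ is open in $\cl_F(X)$. Since $f$ is non-constant and $y\in f(X)$, I can choose $x_0\in X$ with $f(x_0)\neq y$, so that $\{x_0\}\in(X\setminus F)^+$. A Fell-basic neighborhood of $\{x_0\}$ contained in $(X\setminus F)^+$ has the form
\[
\bigcap_{i=1}^n U_i^-\cap (X\setminus K_0)^+
\]
with $K_0\subset X$ compact, $x_0\notin K_0$, and $x_0\in U_i$ for every $i$. The decisive step is a two-point probe: for each $z\in F$, the set $\{x_0,z\}$ automatically lies in every $U_i^-$ because it contains $x_0$, so if $z\notin K_0$ it would also lie in $(X\setminus K_0)^+$ and hence in $(X\setminus F)^+$, contradicting $z\in F$. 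Therefore $F\subset K_0$, and being closed in $X$ and contained in a compact set, $F$ is compact.

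Item (4) will be a formal synthesis. In the forward direction, if $f$ is constant then $\widetilde{f}$ is literally a constant function and hence continuous, while if $f$ is perfect, continuity is exactly (2). For the converse, if $f$ is closed and $\widetilde{f}$ is continuous, then either $f$ is constant or (3) applies to give compact fibers over every $y\in f(X)$; fibers over $Y\setminus f(X)$ are empty and therefore vacuously compact, so $f$ comes out perfect. The genuine obstacle, as I see it, is (3): extracting compactness of a whole fiber from the Fell-openness of $(X\setminus F)^+$. The two-point probe $\{x_0,z\}$ is the crucial device, and the role of the hypothesis \emph{$f$ non-constant} is precisely to supply the external point $x_0$ on which that probe is built; without it the argument collapses, which also explains why the constant case must be carved out in (4).
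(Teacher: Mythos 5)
Your proposal is correct, and parts (2), (3) and (4) run exactly as in the paper: the subbasis identity $\widetilde{f}^{-1}\bigl((Y\setminus K)^+\bigr)=(X\setminus f^{-1}(K))^+$ for closed $f$, the two-point probe $\{x_0,z\}$ against a Fell-basic neighborhood $\bigcap_i U_i^-\cap(X\setminus K_0)^+$ of $\{x_0\}$ to trap the fiber inside $K_0$, and the formal synthesis for (4) are all the paper's own steps. The only genuine divergence is in (1). The paper does \emph{not} show that a proper map into a locally compact Hausdorff space is closed; instead it argues pointwise: given $A$ with $\overline{f(A)}\cap K=\emptyset$, local compactness of $Y$ interposes an open $W$ with $\overline{W}$ compact and $K\subset W\subset\overline{W}\subset Y\setminus\overline{f(A)}$, and then $(X\setminus f^{-1}(\overline{W}))^+$ is the required neighborhood of $A$ (note $f(B)\subset Y\setminus W$ already forces $\overline{f(B)}\subset Y\setminus W$ because $W$ is open, so no closedness of $f$ is needed). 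You instead prove the auxiliary fact that properness plus a locally compact Hausdorff codomain implies $f$ is closed (your argument $f(C)\cap K=f(C\cap f^{-1}(K))$, compact hence closed, capturing any $y\in\overline{f(C)}$ via a compact neighborhood, is correct), and then reduce (1) to the very same preimage computation as (2). Your route buys uniformity — (1) and (2) become one computation in which preimages of subbasic sets are again subbasic — at the cost of an extra (standard) lemma; the paper's route is more local and avoids that lemma, but treats (1) and (2) by different arguments. Both uses of local compactness are essential and equivalent in spirit, and your handling of the empty fibers over $Y\setminus f(X)$ in (4) is a harmless refinement of what the paper leaves implicit.
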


Recall that a map $f:X\to Y$ between topological spaces is called  \textit{proper} if for every compact subset $K$ of $Y$, the inverse image $f^{-1}(K)$ is a compact subset of $X$. We also recall that every perfect map is proper (\cite[Theorem 3.7.2]{engelking}).

\begin{proof}
 
 (1) Consider a compact subset $K\subset Y$ and an element $A\in \cl(X)$ such that 
$\widetilde{f}(A)\in (Y\setminus K)^+$.  Hence, $K\subset Y\setminus\overline{f(A)}$. Since $Y$ is a locally compact Hausdorff space, there exists an open subset $W\subset Y$ such that $\overline{W}$ is compact and $$K\subset W\subset\overline{W}\subset Y\setminus\overline{f(A)}.$$
Thus, using that $f$ is proper, we conclude that   $(X\setminus f^{-1}(\overline{W}))^+$ is an open neighborhood of $A$  and satisfies  $f(B)\subset Y\setminus W$, for every  $B\in (X\setminus f^{-1}(\overline{W}))^+$. Since $ W$ is open, we also have that $\overline{f(B)}\subset Y\setminus W\subset Y\setminus K$ for every  $B\in (X\setminus f^{-1}(\overline{W}))^+$ and therefore $\widetilde f$ is continuous. 

The last part follows from the previous argument in combination with  Proposition~\ref{p:induced vietoris proposition}-(1). 

(2) In this case, since $f$ is perfect, $f$ is closed and therefore $\widetilde f(A)=f(A)$ for every $A\in\cl(X)$. Thus, if $K\subset Y$ is compact, we have that

\begin{align*}
\widetilde{f}^{-1}\left(Y\setminus K\right)&=\{A\in \cl(X)\mid f(A)\subset Y\setminus K\}\\
&=\{A\in \cl(X)\mid A\subset f^{-1}(Y\setminus K)\}\\
&=\{A\in \cl(X)\mid A\subset X\setminus f^{-1}(K)\}\\
&=(X\setminus f^{-1}(K))^+.
\end{align*}
Since $f$ is perfect, then it is proper and therefore $(X\setminus f^{-1}(K))^+=\widetilde{f}^{-1}\left(Y\setminus K\right)$ is open in $\cl_{F^+}(X)$. Thus  $\widetilde{f}:\cl_{F^+}(X)\to\cl_{F^+}(Y)$ is continuous.  Using Proposition~\ref{p:induced vietoris proposition}-(1) again we also conclude that $\widetilde{f}:\cl_{F}(X)\to\cl_{F}(Y)$ is continuous.

(3) Let $y\in f(X)$ and consider the open set $(Y\setminus \{y\})^+$. By the continuity of $\widetilde f$ and the fact that $f$ is closed, we have that 
\[\mathcal O:=\widetilde{f}^{-1}\left((Y\setminus \{y\})^{+}\right)=\{A\in \cl(X)\mid A\subset X\setminus f^{-1}(y)\}
\]
is  open in $\cl_{F^+}(X)$. Since $f$ is non constant, there exist $x\in X$ such that $f(x)\neq y$. Hence $\{x\}\in \mathcal O$ and therefore we can find a compact set $K\subset X$ and open sets $V_1,\dots, V_n\subset X$ such that $$\{x\}\in \mathcal U:=(X\setminus K)^+\cap \left(\bigcap_{i=1}^nV_i^-\right)\subset \mathcal O.$$
 On the other hand, for any element $z\in f^{-1}(y)$ the set  $\{x,z\}\in\cl(X)$ but $\{x,z\}\notin \mathcal O$. In particular, $\{x,z\}\notin \mathcal U$. Since $x\in X\setminus K$ and $x\in V_{i}$ for every $i\in\{1,\dots, n\}$, we conclude that $z\notin X\setminus K$ and therefore $z\in K$. This implies that $f^{-1}(y)$ is a closed subset contained in  the compact set $K$, thus it is compact.

(4) The \textit{if} part follows directly from (2) and the fact that if $f$ is constant, then $\widetilde f$ is constant and therefore it is continuous. The \textit{only if} part is a direct consequence of (3).
\end{proof}

\subsection{Continuity of the induced function  with respect to the Attouch-Wets metric}\label{S:Attouch West}

To finish this section, we want to explore the problem of whether the induced function is continuous on the  hyperspace $\cl_{AW}(X):=(\cl(X), d_{AW})$.

Unlike the Hausdorff distance, the induced function  is not even continuous on  $\mathcal K_{AW}(X):=(\mathcal K(X), d_{AW})$, as we can see in the following example.

\begin{example}
\label{e:AW no continua}
For every non-bounded metric space $(X,d)$, the function  $f:X\to\mathbb R$  given by  
$$f(x)=\tan^{-1}(d(x_0,x)),\;\; x\in X.$$
is a $1$-Lispchitz map  such that the induced function $\widetilde f:\mathcal K_{AW}(X)\to\mathcal K_{AW}(\mathbb R)$ is not continuous, where $\mathbb R$ is equipped with the Euclidean metric.
\end{example}

The reader can verify the veracity of this example as an easy exercise. However, we want to point out that the reason why this example fails is hidden in the fact that the preimages of bounded sets are not necessarily bounded. This situation is better explained in the following proposition, which in turn provides an alternative proof for Example~\ref{e:AW no continua}.

\begin{proposition}\label{p:preimage are bounded}
Let $\left(X,d\right)$ and $\left(Y,d^{\prime}\right)$ be metric spaces and $f:X\rightarrow Y$ be a continuous function. If $\widetilde f:\mathcal{K}_{AW}(X)\rightarrow \mathcal{K}_{AW}(Y)$ is continuous, then $f^{-1}\left(B\right)$ is bounded for any bounded subset $B$ of $Y$ such that $\left|B\cap f\left(X\right)\right|>1$. 
\end{proposition}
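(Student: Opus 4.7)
My plan is to prove the contrapositive. I suppose that $B\subseteq Y$ is bounded, that $|B\cap f(X)|>1$, and that $f^{-1}(B)$ is unbounded, and I will produce a sequence $(A_n)$ in $\mathcal K(X)$ with $A_n\to A_0$ in $d_{AW}$ while $d_{AW}(\widetilde f(A_n),\widetilde f(A_0))$ stays bounded away from $0$, contradicting the continuity of $\widetilde f$ at $A_0$.

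The first step is to fix the data. I pick distinct $y_1,y_2\in B\cap f(X)$, choose $x_1\in f^{-1}(y_1)$ and $x_2\in f^{-1}(y_2)$, and set $\delta:=d'(y_1,y_2)/2>0$. Using that $f^{-1}(B)$ is unbounded, I select a sequence $(z_n)\subseteq f^{-1}(B)$ with $d(x_0,z_n)\to\infty$. By the triangle inequality, for each $n$ at most one of the conditions $d'(f(z_n),y_1)<\delta$ and $d'(f(z_n),y_2)<\delta$ can hold, so after passing to a subsequence I may fix an index $i\in\{1,2\}$ such that $d'(f(z_n),y_i)\geq\delta$ for every $n$. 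Write $x^{*}:=x_i$, $y^{*}:=y_i$, $A_0:=\{x^{*}\}$ and $A_n:=\{x^{*},z_n\}$; then $\widetilde f(A_0)=\{y^{*}\}$ and $\widetilde f(A_n)=\{y^{*},f(z_n)\}$ are compact.

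To see that $A_n\to A_0$, I observe that for $x\in B(x_0,j)$ one has $|d(x,A_n)-d(x,A_0)|=\max\{0,d(x,x^{*})-d(x,z_n)\}$, and this expression vanishes as soon as $d(x_0,z_n)>2j+d(x_0,x^{*})$. Hence for every fixed $j\in\mathbb N$ the inner supremum in \eqref{e:AW} is eventually $0$, and Lemma \ref{aw} yields $d_{AW}(A_n,A_0)\to 0$. For the divergence on the image side I exploit that the topology of $d_{AW}$ on $\textup{CL}(Y)$ does not depend on the base point, so I choose $y_0=y^{*}$; since $y^{*}\in B$ and $B$ is bounded, there exists $R>0$ with $f(z_n)\in B\subseteq B(y^{*},R)$ for all $n$. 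Fixing $j^{*}\in\mathbb N$ with $j^{*}>R$ and evaluating at the point $y=f(z_n)\in B(y_0,j^{*})$, I obtain
\[
\bigl|d'(f(z_n),\widetilde f(A_n))-d'(f(z_n),\widetilde f(A_0))\bigr|=d'(f(z_n),y^{*})\geq\delta,
\]
so that $d_{AW}(\widetilde f(A_n),\widetilde f(A_0))\geq\min\{1/j^{*},\delta\}>0$ uniformly in $n$, delivering the required contradiction.

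The only genuinely subtle step is the subsequence extraction: I need a reference point in $B\cap f(X)$ from which $f(z_n)$ remains \emph{uniformly} separated, and this is precisely where the hypothesis $|B\cap f(X)|>1$ is used. If $B\cap f(X)$ were a singleton $\{y_1\}$, then every $f(z_n)$ would equal $y_1$, the key difference would collapse to $0$, and the argument would break down.
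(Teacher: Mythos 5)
Your proof is correct and follows essentially the same route as the paper's: both adjoin a remote point of $f^{-1}(B)$ to a singleton, use the remoteness to make the Attouch--Wets distance in $X$ small, and use the boundedness of $B$ to evaluate the image distance near the base point and keep it bounded below. The only cosmetic difference is that the paper contradicts continuity at the two singletons $\{a\}$ and $\{b\}$ simultaneously via the triangle inequality (yielding an explicit bound $B(x_0,3k)$ on $f^{-1}(B)$), whereas you pass to a subsequence by pigeonhole and contradict sequential continuity at a single singleton.
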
 

\begin{proof}

Let $x_{0}$ and $y_{0}$ be fixed points in $X$ and $Y$, respectively, so that the metrics $d_{AW}$ and $d^{\prime}_{AW}$ on $\mathcal{K}_{AW}(X)$ and $\mathcal{K}_{AW}(Y)$ are based at $x_0$ and $y_0$.

Let $B$ be a bounded subset of $Y$ such that $\left|B\cap f\left(X\right)\right|>1$. Then there exist $a,b\in f^{-1}\left(B\right)$ with $f\left(a\right)\neq f\left(b\right)$.   Since $B$ is bounded, we can find $M\in \mathbb{N}$ such that $B\subseteq B\left(y_{0}, M\right)$. 

Define $r:=d'(f(a), f(b))/2$ and $\varepsilon:=\min\left\{r,\frac{1}{M}\right\}$. Since $\widetilde f$ is continuous at $\left\{a\right\}$ and $\left\{b\right\}$, we can find $\delta\in (0,1)$ such that, for every  $C\in \mathcal{K}(X)$ and $c\in\{a,b\}$,
 \[
 d_{AW}\left(\left\{c\right\},C\right)<\delta
\; \Longrightarrow\;
d^{\prime}_{AW}\left(\left\{f\left(c\right)\right\},f\left(C\right)\right)<\varepsilon.
 \]

Finally, let $j,L\in \mathbb{N}$ be such that $\frac{1}{j+1}<\delta\leq \frac{1}{j}$ and $\{a,b\}\subset B\left(x_{0}, L\right)$. 

We claim that $f^{-1}\left(B\right)\subseteq B\left(x_{0},3k\right)$, where  $k:=\max\left\{j,L\right\}$. Indeed, if that is not the case,  there must exist $p\in f^{-1}\left(B\right)$ such that $d\left(x_{0},p\right)\geq 3k$. Take $x\in B\left(x_{0},k\right)$ an arbitrary point. Thus, $d(x,p)>2k$ and since $\{a,b\}\subset B\left(x_{0},L\right)\subseteq B\left(x_{0},k\right)$, we also have that $$d\left(x,c\right)\leq d\left(x,x_{0}\right)+d\left(x_{0},c\right)<2k$$
for $c\in\{a,b\}$.
This shows that  $d(x,a)<d(x,p)$ and $d(x,b)<d(x,p)$ for every  $x\in B\left(x_{0},k\right)$. Hence, if $c\in\{a,b\}$, we infer that
\begin{equation}\label{e:distancias cero}
  \sup_{d\left(x,x_{0}\right)<k}\left|d\left(x,\left\{c\right\}\right)-d\left(x,\left\{c,p\right\}\right)\right|=0.  
\end{equation}
Since $j\leq k$ and $\frac{1}{j+1}<\delta\leq \frac{1}{j}$ we can use  Lemma \ref{aw} in combination with equality~(\ref{e:distancias cero}) to conclude that 
$$d_{AW}\left(\left\{a\right\},\left\{a,p\right\}\right)<\delta\;\text{ and }\;d_{AW}\left(\left\{b\right\},\left\{b,p\right\}\right)<\delta.$$
By the choice of $\delta$,   $d^{\prime}_{AW}\left(\left\{f\left(c\right)\right\},\left\{f\left(c\right),f\left(p\right)\right\}\right)<\varepsilon$ for each  $c\in\{a,b\}$.
Hence, if $c\in\{a,b\}$,
\[
\min\left\{\frac{1}{M},\sup_{d^{\prime}\left(x,y_{0}\right)<M}\left|d^{\prime}\left(x,\left\{f\left(c\right)\right\}\right)-d^{\prime}\left(x,\left\{f\left(c\right),f\left(p\right)\right\}\right)\right|\right\}<\varepsilon,
\]
and since $\frac{1}{M}\geq \varepsilon$, we also have that 
\[
\left|d^{\prime}\left(x,\left\{f\left(c\right)\right\}\right)-d^{\prime}\left(x,\left\{f\left(c\right),f\left(p\right)\right\}\right)\right|<\varepsilon,
\]
for every $x\in B(y_0,M)$. In particular, since $p\in f^{-1}\left(B\right)$, then $f\left(p\right)\in B\subseteq B\left(y_{0},M\right)$ and
\[
d^{\prime}\left(f\left(p\right),f\left(c\right)\right)=\left|d^{\prime}\left(f\left(p\right),\left\{f\left(c\right)\right\}\right)-d^{\prime}\left(f\left(p\right),\left\{f\left(c\right),f\left(p\right)\right\}\right)\right|<\varepsilon\leq r.
\]
This implies   that $f\left(p\right)\in B\left(f\left(c\right),r\right)$,  for $c\in \{a,b\}$. Thus
$$d'(f(a), f(b))\leq d'(f(a), f(p))+d'(f(p)), f(b))<2r=d'(f(a), f(b)).$$
From this contradiction we conclude that  $f^{-1}\left(B\right)\subseteq B\left(x_{0}, 3k\right)$, proving that $f^{-1}\left(B\right)$ is bounded, as desired.
\end{proof}

Another necessary condition for the induced function to be continuous is the uniform continuity 
(on a certain family of subsets) of the original map. Before seeing this we need the following lemma.

\begin{lemma} \label{cerrados}
Let $\left(X,d\right)$ and $\left(Y,d^{\prime}\right)$ be metric spaces,  $f:X\rightarrow Y$ a continuous map and $\varepsilon>0$. Assume that $\left(x_{n}\right)_{n\in \mathbb{N}}$ and $\left(y_{n}\right)_{n\in \mathbb{N}}$ are sequences in $X$ such that $\left(d\left(x_{n},y_{n}\right)\right)_{n\in \mathbb{N}}$ converges to $0$ and with the property that $d^{\prime}\left(f\left(x_{n}\right),f\left(y_{n}\right)\right)\geq \varepsilon$ for all $n\in \mathbb{N}$. Then $\left\{x_{n} \mid n\in \mathbb{N}\right\}$ is a closed subset of $X$.
\end{lemma}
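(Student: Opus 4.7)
The plan is to prove that $S:=\{x_n\mid n\in\mathbb{N}\}$ is closed by showing, via contradiction, that $S$ contains all of its limit points. Concretely, I would assume there exists $x\in\overline{S}\setminus S$ and then extract a suitable subsequence of the original sequence $(x_n)$ that converges to $x$ along indices tending to infinity; this is what will allow me to invoke the hypothesis $d(x_n,y_n)\to 0$.

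First, since metric spaces are sequential, $x\in\overline{S}$ gives a sequence $(s_k)$ in $S$ with $s_k\to x$. Each $s_k$ equals $x_{m_k}$ for some index $m_k\in\mathbb{N}$. The key observation, which I would write explicitly, is that the set of indices $\{m_k\}$ must be unbounded: if it were bounded, the $s_k$'s would lie in a finite set, and the limit of a sequence in a finite subset of a Hausdorff space lies in that set, forcing $x\in S$ and contradicting $x\notin S$. Passing to a subsequence and relabeling, I may therefore assume $x_{m_k}\to x$ with $m_k\to\infty$.

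Next, because $m_k\to\infty$, the hypothesis $d(x_n,y_n)\to 0$ gives $d(x_{m_k},y_{m_k})\to 0$, and hence $y_{m_k}\to x$ in $X$. The continuity of $f$ at $x$ then yields $f(x_{m_k})\to f(x)$ and $f(y_{m_k})\to f(x)$, so by the triangle inequality
\[
d'\bigl(f(x_{m_k}),f(y_{m_k})\bigr)\ \leq\ d'\bigl(f(x_{m_k}),f(x)\bigr)+d'\bigl(f(x),f(y_{m_k})\bigr)\ \longrightarrow\ 0.
\]
This directly contradicts the standing assumption $d'(f(x_n),f(y_n))\geq\varepsilon$ for every $n$. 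Hence no such $x$ exists and $S$ is closed.

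The only real subtlety, rather than an obstacle, is justifying that the extracted subsequence can be taken with $m_k\to\infty$: a subsequence of $(x_n)$ converging to $x$ could a priori repeat the same indices, in which case $d(x_{m_k},y_{m_k})\to 0$ would not follow from the hypothesis. The finite-set argument above is precisely what rules this out, using the assumption $x\notin S$.
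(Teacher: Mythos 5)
Your proof is correct and follows essentially the same route as the paper's: argue by contradiction, extract a subsequence of $(x_n)$ converging to a point outside the set, deduce that the corresponding $y$-subsequence converges to the same point, and use continuity of $f$ to contradict the lower bound $\varepsilon$. In fact you are slightly more careful than the published argument, since you explicitly justify that the extracted indices can be taken tending to infinity (so that $d(x_{m_k},y_{m_k})\to 0$ really applies), a point the paper leaves implicit.
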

\begin{proof}
If $\left\{x_{n} \mid n\in \mathbb{N}\right\}$ is not closed in $X$, we can find $p\in X$ and  a subsequence $\left(x_{n_{k}}\right)$ converging to  $p$. Since $\left(d\left(x_{n},y_{n}\right)\right)$ converges to $0$, we also have that $\left(y_{n_{k}}\right)$ converges to $p$. Now, we can use the continuity of $f$ to infer that  $\left(f\left(x_{n_{k}}\right)\right)$ and $\left(f\left(y_{n_{k}}\right)\right)$ converge to $f\left(p\right)$. This contradicts the inequality $d^{\prime}\left(f\left(x_{n}\right),f\left(y_{n}\right)\right)\geq \varepsilon$  and therefore we can conclude that $\left\{x_{n} \mid n\in \mathbb{N}\right\}$ is closed in $X$.
\end{proof}

\begin{theorem} \label{t:continuidad uniforme}
Let $\left(X,d\right)$ and $\left(Y,d^{\prime}\right)$ be metric spaces and $f:X\rightarrow Y$ be a continuous function. If $\widetilde f:\cl_{AW}(X)\rightarrow \cl_{AW}(Y)$ is continuous, then $f$ is uniformly continuous on every $A\subseteq X$ such that $f\left(A\right)$ is bounded.
\end{theorem}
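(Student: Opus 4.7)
I would argue by contrapositive: assume $\widetilde f$ is continuous but $f$ is not uniformly continuous on some $A\subseteq X$ with $f(A)$ bounded, and exhibit a specific $C\in\cl(X)$ together with a sequence $C_K\to C$ in $d_{AW}$ whose images $\widetilde f(C_K)$ stay uniformly $d'_{AW}$-far from $\widetilde f(C)$. Since the restriction of $\widetilde f$ to $\mathcal K_{AW}(X)$ is still continuous, Proposition~\ref{p:preimage are bounded} applies to the bounded set $f(A)$ (after adjoining a second value from $f(X)$ if $|f(A)|=1$, the case $|f(X)|=1$ being trivial) and yields that $A\subseteq f^{-1}(B)$ is bounded.

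From the negation of uniform continuity I would extract $\varepsilon>0$ and sequences $(a_n)\subseteq A$, $(b_n)\subseteq X$ with $d(a_n,b_n)<1/n$ and $d'(f(a_n),f(b_n))\geq\varepsilon$; since $A$ is bounded so is $\{b_n\}$, and by Lemma~\ref{cerrados} both $\{a_n:n\in\mathbb N\}$ and $\{b_n:n\in\mathbb N\}$ are closed in $X$. Fix $M$ with $f(A)\subseteq B(y_0,M)$. The key technical step is a subsequence refinement: by an inductive/Ramsey-style argument on the coloring $(n,m)\mapsto[d'(f(a_n),f(b_m))\gtrless\varepsilon/2]$, together with a case split on whether $(f(a_n))$ admits a Cauchy subsequence in $\overline{f(A)}$, I would arrange that
\[
d'\!\bigl(f(a_n),\,\overline{\{f(b_m):m\in\mathbb N\}}\bigr)\,\geq\,\varepsilon/2 \qquad (n\in\mathbb N).
\]

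With the refined sequences in hand, set $C:=\{b_n:n\in\mathbb N\}$ and, for each $K\in\mathbb N$,
\[
C_K:=\{b_n:n<K\}\cup\{a_n:n\geq K\},
\]
both closed subsets of $X$. An elementary estimate using $d(a_n,b_n)<1/n\leq 1/K$ for $n\geq K$ gives $d_H(C_K,C)\leq 1/K$, hence $d_{AW}(C_K,C)\to 0$. By the assumed continuity of $\widetilde f$ at $C$ one would then have $d'_{AW}(\widetilde f(C_K),\widetilde f(C))\to 0$. However, for any $n\geq K$ the point $y:=f(a_n)$ lies in $\widetilde f(C_K)\cap B(y_0,M)$, so $d'(y,\widetilde f(C_K))=0$ while $d'(y,\widetilde f(C))\geq\varepsilon/2$ by the refinement. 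Choosing $j_0\in\mathbb N$ with $j_0>M$, this $y$ contributes
\[
|d'(y,\widetilde f(C))-d'(y,\widetilde f(C_K))|\,=\,d'(y,\widetilde f(C))\,\geq\,\varepsilon/2
\]
to the inner supremum at scale $j_0$, whence by Lemma~\ref{aw} one obtains $d'_{AW}(\widetilde f(C_K),\widetilde f(C))\geq\min\{1/j_0,\varepsilon/2\}$ uniformly in $K$, a contradiction.

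The hardest part is the subsequence refinement: in a general metric space $Y$ the bounded set $\overline{f(A)}$ need not be precompact, so guaranteeing that each $f(a_n)$ remains $\varepsilon/2$-far from the entire closure $\overline{\{f(b_m)\}}$ is delicate and seems to require a genuine case split according to whether the sequences $(f(a_n))$ and $(f(b_n))$ cluster or remain separated in $Y$.
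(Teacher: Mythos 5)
Your overall strategy is the same as the paper's: negate uniform continuity on $A$, extract $\varepsilon>0$ and sequences $(a_n)$, $(b_n)$ with $d(a_n,b_n)<1/n$ and $d'(f(a_n),f(b_n))\ge\varepsilon$, pass to a subsequence so that each $f(a_n)$ stays uniformly far from \emph{all} the $f(b_m)$, use Lemma~\ref{cerrados} to see that $\{b_n\}$ is closed, and then perturb $\{b_n\}$ by swapping in $a_n$'s to get sets that are $d_{AW}$-close to $\{b_n\}$ but whose images stay $d'_{AW}$-far (the bound $f(A)\subseteq B(y_0,M)$ pins the witness $f(a_n)$ inside a fixed ball, exactly as in the paper). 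The only substantive difference is cosmetic: the paper swaps a single point, $C=(B\setminus\{x_m\})\cup\{a_m\}$, while you swap a whole tail; both give the needed $d_{AW}$-estimate. Two remarks. First, the step you flag as ``the hardest part'' is precisely Efremovic's Lemma (\cite[3.3.1]{beer book}), which the paper simply cites: it yields a subsequence along which $d'(f(a_n),f(b_m))\ge\varepsilon/4$ for all $n,m$. Your Ramsey-style sketch is in the right spirit (it is essentially how Efremovic's Lemma is proved), but as written it is not a proof, and the proposed case split on Cauchy subsequences is not needed. Second, your worry about separating $f(a_n)$ from the \emph{closure} of $\{f(b_m)\}$ is moot: for any set $S$ one has $d'(y,S)=d'(y,\overline S)$, so separation from every $f(b_m)$ already gives separation from the closure; no precompactness of $\overline{f(A)}$ is relevant. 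Finally, the detour through Proposition~\ref{p:preimage are bounded} to conclude that $A$ is bounded is superfluous --- nothing in your construction (closedness of $C$ and $C_K$, the estimate $d_H(C_K,C)\le 1/K$, or the final lower bound) actually uses boundedness of $A$ or of $\{b_n\}$, and the paper's proof does without it.
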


\begin{proof}
Let $x_{0}$ and $y_{0}$ be fixed points in $X$ and $Y$, respectively. We will assume that the metrics $d_{AW}$ and $d^{\prime}_{AW}$ on $\cl_{AW}(X)$ and $\cl_{AW}(Y)$ are based at $x_0$ and $y_0$.

Let $A$ be a subset of $X$ such that $f\left(A\right)$ is bounded and suppose that $f$ is not uniformly continuous on $A$. Then there must exist $\varepsilon>0$ and sequences $(a_n)_{n\in\mathbb N}\subset A$ and $(x_{n})_{n\in\mathbb N}\subset X$ such that $d\left(a_{n},x_{n}\right)<\frac{1}{n}$ and $d^{\prime}\left(f\left(a_{n}\right),f\left(x_{n}\right)\right)\geq \varepsilon$. By Efremovic's Lemma (\cite[3.3.1]{beer book}), we can assume that $d^{\prime}\left(f\left(a_{n}\right),f\left(x_{m}\right)\right)\geq \frac{\varepsilon}{4}$ for all $n,m\in \mathbb{N}$.
Therefore, Lemma \ref{cerrados} implies that   $B\vcentcolon=\left\{x_{n} \mid n\in \mathbb{N}\right\}\in\cl(X)$. We will prove the theorem by showing that $\widetilde f$  is not continuous at $B$.

Since $f\left(A\right)$ is bounded, we can find $j\in \mathbb{N}$ such that $f\left(A\right)\subseteq B\left(y_{0},j\right)$. Let $\varepsilon^{\prime}=\min\left\{\frac{\varepsilon}{4},\frac{1}{j}\right\}$. We claim that for all $\delta>0$ there exists $C\in \textup{CL}(X)$ such that $d_{AW}\left(B,C\right)<\delta$ but $d^{\prime}_{AW}\left(\widetilde f\left(B\right),\widetilde f\left(C\right)\right)\geq \varepsilon^{\prime}$. Indeed, let $\delta>0$ and $m\in \mathbb{N}$ with $\frac{1}{m}<\delta$. Consider
\[
C\vcentcolon=\left(B\backslash \left\{x_{m}\right\}\right)\cup \left\{a_{m}\right\}. 
\]
By Lemma \ref{cerrados}, $C\in\cl(X)$. Furthermore, for every $x\in X$ we have that
\[
\left|d\left(x,B\right)-d\left(x,C\right)\right|\leq d\left(x_{m},a_{m}\right)<\frac{1}{m}, 
\]
and therefore
\[
d_{AW}\left(B,C\right)=\sup_{k\in \mathbb{N}}\min\left\{\frac{1}{k},\sup_{d\left(x,x_{0}\right)<k}\left|d\left(x,B\right)-d\left(x,C\right)\right|\right\}\leq \frac{1}{m}<\delta.
\]

On the other hand, since $d^{\prime}\left(f\left(a_{m}\right),f\left(x_{n}\right)\right)\geq \frac{\varepsilon}{4}$ for every  $n\in \mathbb{N}$, we also have that $d^{\prime}\left(f\left(a_{m}\right),f\left(B\right)\right)\geq \frac{\varepsilon}{4}$. Furthermore, since $f\left(a_{m}\right)$ belongs to $f\left(A\right)\subseteq B\left(y_{0},j\right)$, we get
\[
\begin{split}
\sup_{d^{\prime}\left(x,y_{0}\right)<j}\left|d^{\prime}\left(x,f\left(B\right)\right)-d^{\prime}\left(x,f\left(C\right)\right)\right| & \geq \left|d^{\prime}\left(f\left(a_{m}\right),f\left(B\right)\right)-d^{\prime}\left(f\left(a_{m}\right),f\left(C\right)\right)\right| \\
& = d^{\prime}\left(f\left(a_{m}\right),f\left(B\right)\right)\geq \frac{\varepsilon}{4}\geq \varepsilon^{\prime}.
\end{split}
\]
Finally, since $\frac{1}{j}\geq \varepsilon^{\prime}$ we obtain that
\[ 
\begin{split}
d^{\prime}_{AW}\left(\overline{f\left(B\right)},\overline{f\left(C\right)}\right) & \geq \min\left\{\frac{1}{j},\sup_{d^{\prime}\left(x,y_{0}\right)<j}\left|d^{\prime}\left(x,\overline{f\left(B\right)}\right)-d^{\prime}\left(x,\overline{f\left(C\right)}\right)\right|\right\} \\
& = \min\left\{\frac{1}{j},\sup_{d^{\prime}\left(x,y_{0}\right)<j}\left|d^{\prime}\left(x,f\left(B\right)\right)-d^{\prime}\left(x,f\left(C\right)\right)\right|\right\}\geq \varepsilon^{\prime}.
\end{split}
\]
Therefore $\widetilde f$ is not continuous at $B$, a contradiction.
\end{proof}

Finally, we will characterize the continuity of the induced function precisely by the  conditions given in Proposition~\ref{p:preimage are bounded} and Theorem~\ref{t:continuidad uniforme}. In order to do that, let us prove the following technical lemma. 

\begin{lemma}\label{l:distancia igual a distancia en interseccion}
Let $(X,d)$ be a metric space, $x_0\in X$, $j>0$ and $C\subset X$ a nonempty set. If $L>2j+d(x_0, C)$,  then 
$$d(x,C)=d\big(x,C\cap B(x_0, L) \big)\;\text{ for every }\;x\in B\left(x_{0},j\right).$$
\end{lemma}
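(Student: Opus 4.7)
The inclusion $C \cap B(x_{0},L) \subseteq C$ immediately gives $d(x,C) \leq d(x,C \cap B(x_{0},L))$ for every $x$, so all the content of the lemma is in the reverse inequality. The plan is to show that for $x \in B(x_{0},j)$, any point of $C$ lying outside $B(x_{0},L)$ is strictly farther from $x$ than some chosen witness inside $C \cap B(x_{0},L)$, hence cannot contribute to the infimum $d(x,C)$.

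First I would exploit the strict hypothesis $L > 2j + d(x_{0},C)$ to choose a witness $c_{0} \in C$ with $d(x_{0},c_{0}) < L - 2j$, which is possible since $d(x_{0},C)$ is an infimum and $L - 2j > d(x_{0},C)$. In particular $c_{0} \in B(x_{0},L)$, so $c_{0} \in C \cap B(x_{0},L)$. For $x \in B(x_{0},j)$ the triangle inequality yields
$$ d(x,c_{0}) \leq d(x,x_{0}) + d(x_{0},c_{0}) < j + (L - 2j) = L - j, $$
so $d(x, C \cap B(x_{0},L)) \leq d(x,c_{0}) < L - j$.

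Next, for any $c \in C \setminus B(x_{0},L)$, I would use the reverse triangle inequality:
$$ d(x,c) \geq d(x_{0},c) - d(x_{0},x) > L - j. $$
Therefore every point of $C$ outside $B(x_{0},L)$ is farther from $x$ than $c_{0}$, so such points are irrelevant to the infimum and $d(x,C) = \inf\{d(x,c) : c \in C \cap B(x_{0},L)\} = d(x, C \cap B(x_{0},L))$, as required.

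There is no real obstacle here; the only thing to double-check is the bookkeeping of the strict inequalities, where the two factors of $j$ in $L > 2j + d(x_{0},C)$ are consumed separately (one to drag the witness $c_{0}$ into $B(x_{0},L)$, the other to absorb the triangle inequality from $x$ to $c_{0}$), leaving exactly the slack needed to separate $d(x,c_{0})$ from $d(x,c)$ for $c \notin B(x_{0},L)$.
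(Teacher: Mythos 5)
Your proof is correct, and it is essentially the same argument as the paper's: both select a witness point of $C$ inside $B(x_0,L)$ (you via $d(x_0,c_0)<L-2j$, the paper via $d(x,z)<j+d(x_0,C)$) and use the triangle inequality plus the slack $L>2j+d(x_0,C)$ to show every point of $C$ outside $B(x_0,L)$ is strictly farther from $x$, hence irrelevant to the infimum. The bookkeeping of strict inequalities checks out, so there is nothing to correct.
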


\begin{proof}
Let $x\in B\left(x_{0},j\right)$.  It suffices to show that there exists a point $z\in C\cap B\left(x_{0},L\right)$ with the property that $d\left(x,z\right)\leq d\left(x,y\right)$ for every $y\in C\backslash B\left(x_{0},L\right)$. Since
\[
d\left(x,C\right)\leq d\left(x,x_{0}\right)+d\left(x_{0},C\right)<j+d\left(x_{0},C\right),
\]
 we can pick $z\in C$ such that
 \begin{equation}\label{e:z}
     d\left(x,z\right)<j+d\left(x_{0},C\right).
 \end{equation}
 Therefore, by our hypothesis on $L$, we get that
\[
d\left(x_{0},z\right)\leq d\left(x_{0},x\right)+d\left(x,z\right)<j+j+d\left(x_{0},C\right)<L.
\]
This proves that  $z\in C\cap B\left(x_{0},L\right)$. Moreover, if $y\in C\backslash B\left(x_{0},L\right)$, then $d\left(x_{0},y\right)\geq L$. In this case inequality~(\ref{e:z}) guarantees that
\[
d\left(x,y\right)\geq d\left(x_{0},y\right)-d\left(x_{0},x\right)>L-j>j+d\left(x_{0},C\right)>d\left(x,z\right).
\]
Hence, $d\left(x,z\right)<d\left(x,y\right)$, as desired.
\end{proof}

\begin{theorem}\label{t:AWmain}
Let $\left(X,d\right)$ and $\left(Y,d^{\prime}\right)$ be metric spaces, and $f:X\rightarrow Y$ be a continuous function.  Then $\widetilde f:\cl_{AW}(X)\rightarrow \cl_{AW}(Y)$ is continuous if and only if the following two conditions are true:
\begin{enumerate}[\rm(1)]
\item $f$ is uniformly continuous on every subset $A$ of $X$ such that $f\left(A\right)$ is bounded.
\item The set $f^{-1}\left(B\right)$ is bounded for every bounded subset $B$ of $Y$ such that $\left|B\cap f\left(X\right)\right|>1$. 
\end{enumerate}
\end{theorem}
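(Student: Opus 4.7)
The \emph{only if} direction is essentially already in hand: condition (1) is exactly Theorem~\ref{t:continuidad uniforme}, and condition (2) follows from Proposition~\ref{p:preimage are bounded} applied to the restriction of $\widetilde f$ to $\mathcal K_{AW}(X)$, which is continuous because $\mathcal K_{AW}(X)$ is a subspace of $\cl_{AW}(X)$. So the bulk of the work is the \emph{if} direction. Throughout assume $f$ is nonconstant (otherwise $\widetilde f$ is constant, hence trivially continuous). Fix basepoints $x_0, y_0$ on which the metrics $d_{AW}, d'_{AW}$ are based, a set $A\in\cl(X)$, and $\varepsilon\in(0,1)$, and let $j\in\mathbb N$ satisfy $\frac{1}{j+1}<\varepsilon\le\frac{1}{j}$. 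By Lemma~\ref{aw}, it suffices to produce $\delta>0$ such that $d_{AW}(A,B)<\delta$ implies $\sup_{d'(y,y_0)<j}|d'(y,f(A))-d'(y,f(B))|<\varepsilon$.

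The key idea is to \emph{localize}: use Lemma~\ref{l:distancia igual a distancia en interseccion} to replace $d'(y,f(A))$ and $d'(y,f(B))$, for $y\in B(y_0,j)$, by the corresponding distances to their intersections with a sufficiently large ball $B(y_0,L)$; these restricted sets are $f(A\cap U)$ and $f(B\cap U)$ where $U=f^{-1}(B(y_0,L))$. Condition (2) makes $U$ bounded (provided $B(y_0,L)$ meets $f(X)$ in more than one point, which holds for $L$ large since $f$ is nonconstant), and condition (1) makes $f$ uniformly continuous on $U$. This turns the problem into transferring closeness in $X$, measured by $d_{AW}$ on a big ball $B(x_0,k)\supseteq U$, into closeness in $Y$ via the uniform modulus on $U$.

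Concretely, I would pick $a^*\in A$ with $d'(y_0,f(a^*))\le d'(y_0,f(A))+1$, then choose $L\in\mathbb N$ with $L>2j+d'(y_0,f(A))+2$ and with $|B(y_0,L)\cap f(X)|>1$. Set $U=f^{-1}(B(y_0,L))$, choose $k\in\mathbb N$ with $U\cup\{a^*\}\subset B(x_0,k)$, apply (1) on $U$ to pick $\sigma\in(0,\varepsilon/3)$ with $u\in U$, $x\in X$, $d(u,x)<\sigma$ $\Rightarrow$ $d'(f(u),f(x))<\varepsilon/3$, and finally take $\delta\le\min\{\sigma,1/k\}$. Suppose $d_{AW}(A,B)<\delta$. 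By Lemma~\ref{aw}, $|d(x,A)-d(x,B)|<\sigma$ for every $x\in B(x_0,k)$. Applying this at $a^*$ gives $b^*\in B$ with $d(a^*,b^*)<\sigma$, and since $a^*\in U$ uniform continuity yields $d'(f(a^*),f(b^*))<\varepsilon/3$; hence $d'(y_0,f(B))\le d'(y_0,f(A))+1+\varepsilon/3<d'(y_0,f(A))+2$. Thus $L>2j+d'(y_0,f(B))$ as well, and Lemma~\ref{l:distancia igual a distancia en interseccion} applies to both $f(A)$ and $f(B)$.

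Now fix $y\in B(y_0,j)$ and $\eta>0$. For the inequality $d'(y,f(B))\le d'(y,f(A))+\varepsilon/3+\eta$, pick $a\in A\cap U$ with $d'(y,f(a))<d'(y,f(A\cap U))+\eta=d'(y,f(A))+\eta$; since $a\in B(x_0,k)$ there is $b\in B$ with $d(a,b)<\sigma$, and uniform continuity on $U$ gives $d'(f(a),f(b))<\varepsilon/3$, whence $d'(y,f(B))\le d'(y,f(b))<d'(y,f(A))+\eta+\varepsilon/3$. The symmetric argument, starting from $b\in B\cap U$, yields the reverse inequality. Letting $\eta\to 0$ gives $|d'(y,f(A))-d'(y,f(B))|\le\varepsilon/3<\varepsilon$ uniformly in $y\in B(y_0,j)$, and Lemma~\ref{aw} finishes the proof. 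The main obstacle is orchestrating the parameters $L,k,\sigma,\delta$ so that $L$ is simultaneously large enough to localize both $f(A)$ and $f(B)$ (which requires the a priori control $d'(y_0,f(B))\lesssim d'(y_0,f(A))$ that in turn needs (1) and (2) together), while $\delta$ stays small enough to invoke uniform continuity on the chosen ball; once this bookkeeping is set up the estimates are routine.
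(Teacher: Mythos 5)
Your proposal is correct and follows essentially the same route as the paper: the only-if part by citing Proposition~\ref{p:preimage are bounded} and Theorem~\ref{t:continuidad uniforme}, and the if part by choosing $L>2j+d'(y_0,f(A))+\text{const}$, using (2) to bound $f^{-1}(B(y_0,L))$, (1) for uniform continuity there, Lemma~\ref{l:distancia igual a distancia en interseccion} to localize both $f(A)$ and $f(B)$ (after an a priori bound on $d'(y_0,f(B))$, which the paper obtains as its Claim 2 and you via the auxiliary point $a^*$), and Lemma~\ref{aw} to conclude. The differences are only bookkeeping ($\varepsilon/3$ margins, the $\eta$-approximation), not a different argument.
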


\begin{proof}
Let $x_{0}$ and $y_{0}$ be fixed points in $X$ and $Y$, respectively, and let us assume that $d_{AW}$ and $d^{\prime}_{AW}$ are based at $x_0$ and $y_0$, respectively.

If $\widetilde f:\cl_{AW}(X)\rightarrow \cl_{AW}(Y)$ is continuous, then Proposition~\ref{p:preimage are bounded} and Theorem~\ref{t:continuidad uniforme} guarantee that $(1)$ and $(2)$ are true.

For the converse implication, let us suppose that (1) and (2) hold. If $f$ is a constant function, then $\widetilde f$ is a constant function too and therefore it is continuous. Hence, we can suppose that $f$ is not constant. Let $A\in \cl(X)$, $\varepsilon\in (0,1]$ and  $j\in \mathbb{N}$  such that $\frac{1}{j+1}<\varepsilon\leq \frac{1}{j}$. Since $f$ is not constant, there exists $L\in \mathbb{N}$ with the property that $\left|B\left(y_{0},L\right)\cap f\left(X\right)\right|>1$ and
\begin{equation} \label{L}
L>2j+d^{\prime}\left(y_{0},f\left(A\right)\right)+\varepsilon.
\end{equation}

By (2),  $f^{-1}\left(B\left(y_{0},L\right)\right)$ is bounded and therefore there exists $M\in \mathbb{N}$ such that $f^{-1}\left(B\left(y_{0},L\right)\right)\subset B\left(x_{0},M\right)$.

Furthermore,  by (1) we can pick a  $\delta\in \left(0,\frac{1}{M}\right]$ such that for every $p\in f^{-1}\left(B\left(y_{0},L\right)\right)$ and $x\in X$, if $d\left(p,x\right)<\delta$ then $d^{\prime}\left(f\left(p\right),f\left(x\right)\right)<\varepsilon$.

Let $B\in \cl(X)$ with $d_{AW}\left(A,B\right)<\delta$. Since $\delta\leq \frac{1}{M}$, by Lemma \ref{aw} we have that
\begin{equation} \label{M}
\sup_{d\left(x,x_{0}\right)<M}\left|d\left(x,A\right)-d\left(x,B\right)\right|<\delta.
\end{equation}
In order to prove that  $\widetilde f$ is continuous at $A$ we need to show that  $d^{\prime}_{AW}\left(\widetilde f\left(A\right),\widetilde f\left(B\right)\right)<\varepsilon$.

Now, if we apply Lemma~\ref{l:distancia igual a distancia en interseccion} to the set $f(A)$ we obtain the following.
\begin{claim}
$d^{\prime}\left(x,f\left(A\right)\right)=d^{\prime}\left(x,f\left(A\right)\cap B\left(y_{0},L\right)\right)$ for every $x\in B\left(y_{0},j\right)$.
\end{claim}

\begin{claim}
$d^{\prime}\left(y_0,f\left(B\right)\right)\leq d^{\prime}\left(y_{0},f\left(A\right)\right)+\varepsilon.$
\end{claim}

\begin{proof}
 Consider $y\in f\left(A\right)\cap B\left(y_{0},L\right)$ and take $z\in A$ such that $f\left(z\right)=y$. Since $z$ belongs to $f^{-1}\left(B\left(y_{0},L\right)\right)\subseteq B\left(x_{0},M\right)$, by the inequality (\ref{M}) we have 
\[
d\left(z,B\right)=\left|d\left(z,A\right)-d\left(z,B\right)\right|<\delta.
\]
Hence, there exists $b\in B$ such that $d\left(z,b\right)<\delta$, and since $z\in f^{-1}\left(B\left(y_{0},L\right)\right)$, this implies that $d^{\prime}\left(f\left(z\right),f\left(b\right)\right)<\varepsilon$. Therefore,
\[
\begin{split}
d^{\prime}\left(y_0,f\left(B\right)\right)\leq d^{\prime}\left(y_0,f\left(b\right)\right) & \leq d^{\prime}\left(y_{0},f(z)\right)+d^{\prime}\left((f(z),f\left(b\right)\right) \\
& <d^{\prime}\left(y_{0},f(z)\right)+\varepsilon.
\end{split}
\]
Since $y=f(z)\in f\left(A\right)\cap B\left(y_{0},L\right)$ was chosen arbitrarily, we conclude that
$$d^{\prime}\left(y_0,f\left(B\right)\right)  \leq d^{\prime}\left(y_{0},f\left(A\right)\cap B\left(y_{0},L\right)\right)+\varepsilon. $$

Therefore Claim 2 follows from Claim 1.
\end{proof}

From Claim 2 we infer that $d'(y_0, f(B))+2j\leq d^{\prime}\left(y_{0},f\left(A\right)\right)+\varepsilon+2j$ and therefore we can use Lemma~\ref{l:distancia igual a distancia en interseccion} again to prove our last claim of the proof.

\begin{claim}
 $d^{\prime}\left(x,f\left(B\right)\right)=d^{\prime}\left(x,f\left(B\right)\cap B\left(y_{0},L\right)\right)$ for every $x\in B\left(y_{0},j\right)$.
\end{claim}

Let $z\in B$ be such that $f\left(z\right)=y\in f\left(B\right)\cap B\left(y_{0},L\right)$.  Since $z$ belongs to $f^{-1}\left(B\left(y_{0},L\right)\right)\subseteq B\left(x_{0},M\right)$, by the inequality (\ref{M}) we obtain
\[
d\left(z,A\right)=\left|d\left(z,A\right)-d\left(z,B\right)\right|<\delta.
\]
Hence, there exists  a point $a\in A$ such that $d\left(z,a\right)<\delta$, and since $z\in f^{-1}\left(B\left(y_{0},L\right)\right)$, this implies that $d^{\prime}\left(f\left(z\right),f\left(a\right)\right)<\varepsilon$. Therefore,
\begin{equation} \label{u}
d^{\prime}\left(y,f\left(A\right)\right)=d^{\prime}\left(f\left(z\right),f\left(A\right)\right)\leq d^{\prime}\left(f\left(z\right),f\left(a\right)\right)<\varepsilon.
\end{equation}

Let $x\in B\left(y_{0},j\right)$.
Then, by (\ref{u}),
$$d^{\prime}\left(x,f\left(A\right)\right)\leq d^{\prime}\left(x,y\right)+d^{\prime}\left(y,f\left(A\right)\right)<d^{\prime}\left(x,y\right)+\varepsilon.$$ 
Since this is true for every $y\in f\left(B\right)\cap B\left(y_{0},L\right)$, we conclude that
\[
d^{\prime}\left(x,f\left(A\right)\right)\leq d^{\prime}\left(x,f\left(B\right)\cap B\left(y_{0},L\right)\right)+\varepsilon.
\]
Hence, by Claim 3, we obtain that  $d^{\prime}\left(x,f\left(A\right)\right)-d^{\prime}\left(x,f\left(B\right)\right)\leq \varepsilon$. 
Analogously we can prove that $d^{\prime}\left(x,f\left(B\right)\right)-d^{\prime}\left(x,f\left(A\right)\right)\leq \varepsilon$, and therefore $\left|d^{\prime}\left(x,f\left(A\right)\right)-d^{\prime}\left(x,f\left(B\right)\right)\right|\leq \varepsilon$. Since this is true for every $x\in B\left(y_{0},j\right)$, we conclude that
\[
\sup_{d\left(x,y_{0}\right)<j}\left|d^{\prime}\left(x,\overline{f\left(A\right)}\right)-d^{\prime}\left(x,\overline{f\left(B\right)}\right)\right|=
\]
\[
\sup_{d\left(x,y_{0}\right)<j}\left|d^{\prime}\left(x,f\left(A\right)\right)-d^{\prime}\left(x,f\left(B\right)\right)\right|\leq\varepsilon.
\]

Finally, we can use that $\frac{1}{j+1}<\varepsilon\leq \frac{1}{j}$ in combination with Lemma \ref{aw}, to conclude that $d^{\prime}_{AW}\left(\overline{f\left(A\right)},\overline{f\left(B\right)}\right)\leq\varepsilon$. This proves that $\widetilde f$ is continuous at every $A\in \cl(X)$.
\end{proof}

\section{Continuity of the induced action}\label{s:acciones}

Through this section we use the letter $G$ to denote a topological group. The given topology of the group is denoted by $\tau_G$. If there exists a continuous action of $G$ on a topological space $X$,  we say that $X$ is a $G$-space.  As usual, the image of the pair $(g,x)\in G\times X$  under the action of $G$ on $X$ will be simply denoted by $gx$. We refer the reader to the monograph \cite{Bredon}  for a deeper understanding of the theory of $G$-spaces.

If $X$ is a $G$-space,  every element of the group $g\in G$ defines a homeomorphism $X\to X$ by means of the rule $x\mapsto gx$. To avoid extra notation, we denote this homeomorphism by $g$. 
This implies that  $g(A)$ is closed for every $A\in \cl(X)$ and every $g\in G$. Therefore,  the action of  $G$ on $X$  induces a natural (algebraic) action on $\cl(X)$ defined by the following rule:
 $$(g,A)\mapsto gA=\{ga : a\in A\}.$$
  This action is  called {\it the natural action} of $G$ on  $\cl(X)$, or simply {\it the induced action} of $G$ on $\cl(X)$.
  Even if this action is well defined, it may fail to be continuous. 
  
In this section  we approach the following problem: if $X$ is a $G$-space, under what conditions does the hyperspace $\cl(X)$ become a $G$-space?  In our results, we put special interest in the topology of the group $G$. We will also consider the case of the hyperspace of all closed and bounded subsets equipped with the Hausdorff metric.

To approach this question, first 
let us denote by $\ho (X)$ the group of all homeomorphisms of a topological space $X$. If $X$ is a $G$-space, then there exists an obvious group monomorphism $G\hookrightarrow \ho (X)$. Thus,  the group $G$ can be considered as an algebraic subgroup of $\ho (X)$.

If $A,B\subset X$, let  $$[A,B]=\{f\in G :f(A)\subset B\}.$$

Using this notation, let us recall that the \textit{compact-open topology} on $G$ is the topology generated by all sets of the form $[K,V]$, where $K$ is compact and $V$ is open in $X$. 
On the other hand, the \textit{closed-open topology} on $G$ is the topology generated by all sets of the form $[C,V]$, where $C$ is closed and $V$ is open in $X$.

In \cite{arens}, Arens studied which function-space-topologies turn $\ho(X)$ into a topological group. In particular, 
if $X$ is a normal space and $\ho(X)$ is equipped with the closed-open topology,  then $\ho(X)$ is always a topological group. 
On the other hand, if  $\ho(X)$ is endowed with the compact-open topology and $X$ is locally connected and locally compact, then $\ho(X)$ is a topological group. The same remains true if $X$ is just a compact space. 
 
However, being a topological group is not enough to guarantee the continuity of the action on $X$, nor on $\cl(X)$. In \cite{concilio}, the author studied the problem of whether $\cl(X)$ is a $\ho(X)$-space, where both $\cl(X)$ and $\ho(X)$ are endowed with certain topologies that depend on a so-called Urysohn family in $X$. From  \cite[Theorem 4.1]{concilio}, we can conclude that for a locally compact space $X$, the group $\ho(X)$ endowed with the compact-open topology is a  topological group if and only if 
the evaluation map $E:\ho(X)\times \cl_{F}(X)\to\cl_F(X)$ is continuous, provided that  $\ho(X)$ is equipped with the compact-open topology.  This result was previously proved by R. K. Wicks in the unpublished paper \cite{Wicks}. 

In a similar way, from \cite[Theorem 4.1]{concilio} we can also infer that  $\ho(X)$ endowed with the closed-open topology being a topological group is equivalent to the continuity of the evaluation map $E:\ho(X)\times \cl_V(X)\to\cl_V(X)$, provided that $X$ is a normal space and $\ho(X)$ is equipped wit the closed open topology.

Following this line, we want to start by making some remarks when we consider the Fell topology and the Vietoris topology on $\cl(X)$. Even if these remarks
 cannot be  directly infered from the sentence of Di Concilio's Theorem,  the technique we use to prove them is very similar to the one used in \cite{concilio}.

\begin{remark}{(c.f. \cite[Theorem 4.1-5]{concilio})}\label{r:coninuity action lower vietoris}
Let $X$ be  $G$-space. Then $\cl_{V^-}(X)$ equipped with the induced action of $G$ is always a $G$-space.
\end{remark}

\begin{proof}
Let $(g,A)\in G\times\cl_{V^-}(X)$ be such that $gA\in O^-$, for some open set $O\subset X$. Then, there exists a point  $a\in A$ such that $ga\in O$. Since the action of $G$ is continuous on $X$, we can pick two open neighbourhoods $U\subset G$ and $W\subset X$ of $g$ and $a$, respectively, such that $hy\in O$ for every $h\in U$ and $y\in W$.
Hence, for every pair $(h, B)\in U\times W^{-}$, there must exist a point $b\in B\cap W$ and therefore $hb\in O$. This implies that $hB\in O^{-}$ and therefore the induced action is continuous.
\end{proof}

After this remark, we conclude that the continuity of the induced actions  on the hyperspaces $\cl_F(X)$ and $\cl_V(X)$ depends on the upper Fell topology and upper Vietoris topology, respectively.

 Now, let us recall that if $X$ is a $G$-space, then the continuity of the action on $X$ guarantees that the topology of $G$ must be admissible and therefore it must contain the compact-open topology. 
Recall that a topology $\tau$ on a  subgroup $G\leq \ho(X)$ is called admissible if the evaluation map $E:G\times X\to X$ is continuous (see \cite[Section 3]{arens}).

 \begin{remark}
\label{r: contiene compacto abierta}
Let $(G,\tau_G)$ be a topological group and $X$ be a $G$-space. Then $ \tau_G$ contains the compact-open topology of $G$.
 \end{remark}

The proof of this lemma is well-known and will be omitted.


Let $A$ and $B$ be arbitrary subsets of a $G$-space $X$. Observe that an element of the group $g\in G$ lies in $[A, B]$ if and only if $g^{-1}$ lies in $[X\setminus B, X\setminus A]$. Since the inversion of the group is a homeomorphism of $G$, the set $[A, B]$ is open in $G$ if and only if $[X\setminus B, X\setminus A]$ is open in $G$. This fact in combination with Remark~\ref{r: contiene compacto abierta}
implies the following remark.

\begin{remark}\label{r:abierto compacto abierta}
Let $X$ be a $G$-space.
\begin{enumerate}
\item $[A,B]\subset G$ is open in $G$ if and only if $[X\setminus B, X\setminus A]$ is open in $G$. 
    \item If $K\subset X$ is compact and $U\subset X$ is open, then the set $[X\setminus U,X\setminus K]$ is open in $G$. 
\end{enumerate}
\end{remark}

\begin{remark}
\label{r: action Fell topology}
Let $X$ be a locally compact $G$-space. Then the induced action of $G$ on $\cl_{F^+}(X)$ is continuous. In particular, $\cl_{F}(X)$ is a $G$-space.
\end{remark}

\begin{proof} By Remark~\ref{r:coninuity action lower vietoris} it is enough to prove that the induced action of $G$ on $\cl_{F^{+}}(X)$ is continuous.
Let $K\subset X$ be a compact subset and consider a pair  $(g,A)\in G\times \cl_{F^+}(X)$  be such that $gA\in (X\setminus K)^+$.  Thus  $gA\subset X\setminus K$ and therefore  $g^{-1}K\subset X\setminus A$. Since $X$ is locally compact, there exists an open set $V\subset X$ such that $\overline{V}$ is compact and $$g^{-1}(K)\subset V\subset \overline{V}\subset X\setminus A,$$

then 
$$gA\subset g(X\setminus\overline{V})\subset g(X\setminus V)\subset X\setminus K.$$

Let $O=[X\setminus V, X\setminus K]\times (X\setminus \overline{V})^+$. By Remark~ \ref{r:abierto compacto abierta}, $O$ is an open set and contains the pair $(g,A)$. If $(h,B)\in O$, clearly
$$hB\subset h(X\setminus \overline{V})\subset h(X\setminus V)\subset X\setminus K.$$
This proves that $hB\in (X\setminus K)^+$ and therefore the action on $\cl_{F^+}(X)$ is continuous.

\end{proof}

Observe that every element of the group  $G$ determines a perfect bijection on $X$. Then Theorem~\ref{t:induced fell}-(3) guarantees that each element of $G$ induces a continuous  function on $\cl_F(X)$, regardless of $X$ is locally compact or not. However, the local compactness of $X$ is essential in Remark~\ref{r: action Fell topology}, as we can see in the following example.

\begin{example}
Let $G=\mathbb{Q}*$ be the multiplicative group of the field $\mathbb{Q}$ and $X=\mathbb{Q}$. Clearly, $G$ acts continuously on $X$. We will show that the induced action on $\cl_{F^+}(X)$ is not continuous. Consider the compact set $K=\{1\}\cup\{1+1/n:n\in\mathbb{N}\}$ and let $A$ be any closed set in $X$ such that $A\subset X\setminus K$. We will show that the action on $\cl_{F^+}(X)$ is not continuous at the point $(1,A)$.  

Let $\varepsilon>0$ and consider any compact set  $C\subset X$  such that $A\subset X\setminus C$. Consider a positive scalar $\delta >0$ with the property that  $1-\varepsilon<1/(1+\delta)$.
We will show that $(X\setminus K)^+$ cannot contain the image under the action of $G$ of the set $(1-\varepsilon,1+\varepsilon)\times(X\setminus C)^+$.

Indeed, since $C$ is compact, it cannot contain the set $(1,1+\delta)\cap\mathbb{Q}$. Therefore, there exists $v\in (1,1+\delta)\cap\mathbb{Q}$ such that $v\in X\setminus C$. Let $n\in\mathbb{N}$ be small enough such that $1/n<\varepsilon /2$ and let $r=v^{-1}(1+1/n)$. Then $r\in (1-\varepsilon,1+\varepsilon)$, $\{v\}\in (X\setminus C)^+$ and $rv=1+1/n\in K$.  
This shows that the induced action is not continuous. 
\end{example}

\begin{remark}
 Let $(G,\tau_G)$ be a topological group and
 $X$ be a $G$-space. Then:
\begin{enumerate}[\rm(1)]
    \item If the induced action of $G$ on  $\cl_{V^+}(X)$ is continuous, then   $\tau_G$ contains the closed-open topology of $G$.
    \item If $\tau_G$ contains the closed-open topology of $G$ and $X$ is a normal space, then the induced action of $G$ on  $\cl_{V^+}(X)$ is continuous.
    \item If $\tau_G$ contains the closed-open topology of $G$ and $X$ is a normal space, then the induced action of $G$ on  $\cl_{V}(X)$ is continuous.
\end{enumerate}
\end{remark}
 
 \begin{proof}
 (1) Let $[C,U]$ be a subbasic open set in the closed-open topology of $G$ and let $g\in [C,U]$. Thus,  $g(C)\in U^+$. Since the induced action of $G$ on $\cl_{V^+}(X)$ is continuous, there exist open sets $W\subset G$ and $O\subset X$ such that $g\in W$, $C\in O^+$ and $WO^+\subset U^+$. This implies that $W\subset [C,U]$, and therefore $[C,U]\in \tau_{G}$.

  We omit the proof of (2) and (3), since it is quite similar to the one of Remark~\ref{r: action Fell topology}.
 \end{proof}

\subsection{Group actions on hyperspaces equipped with the Hausdorff metric topology}

For a metric $G$-space $X$, it is well-known and easy to verify that the induced action on $(\mathcal K(X), d_H)$ is always continuous (see \cite[Section 3]{natalia sergey}).

Since the Hausdorff metric is not very well behaved on $\cl(X)$, it is not surprising to find that the induced action of $G$ on $\cl(X)$ is not necessarily continuous. Consider, for example, the induced action of the group $\mathbb S^1$ on $\cl(\mathbb C)$. In this case, every neighbourhood $U\subset\mathbb S^1$ of the identity contains an element $w\neq 1$. If we consider the closed set $A=\{z \in\mathbb C : \text{Re}(z)\geq 0,\; \text{Im}(z)=0\}$, it is not difficult to verify that $d_H(A, wA)=\infty$ and therefore the natural action of $\mathbb S^1$ on $\cl(\mathbb C)$ cannot be continuous.

This is why  we first focus our attention on the hyperspace $BC(X)$ of closed and bounded subsets of $X$. However, even in  $BC(X)$ the induced action may fail to be continuous. Not even if the group consists of isometries. Indeed, in   \cite[Example 3.1]{Jonard 2014},  there is an example of a $G$-Banach space $X$, where $G$ is a compact group of isometries, such that the induced action of $G$ on $BC(X)$ is not continuous.

Let $(X,d)$ be a metric space and $G\leq \ho (X)$ be any subgroup. Recall that the  \textit{topology of the uniform convergence on bounded sets} ($\tau_{u.c.b.}$) on $G$ is the topology generated by all sets of the form
    $$(A,f,\varepsilon)=\{g\in G:d(f(x),g(x))<\varepsilon\;\forall x\in A\},$$ where $A\subset X$ is bounded, $f\in G$ and $\varepsilon>0$.
 Using this notation, let us also recall that the \textit{topology of the uniform convergence} on $G$ is 
the topology generated by the sets of the form $(X,f,\varepsilon)$, where $f\in G$ and $\varepsilon>0$.

The following is a simple generalization of \cite[Example 3.3]{Jonard 2014}.

\begin{theorem}\label{t:G en BCH}
Let $(G, \tau_G)$ be a topological group and  $X$ be a metric $G$-space. Assume that the homeomorphisms on $X$ induced by every $g\in G$ are boundedness preserving and uniformly continuous on bounded sets.
If $\tau_G$ contains the topology of the uniform convergence on bounded sets,  then the induced actions of $G$ on $BC_{H^-}(X)$, $BC_{H^+}(X)$ and $BC_H(X)$ are continuous.
\end{theorem}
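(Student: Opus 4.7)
The plan is to prove joint continuity of the action $\Phi\colon G\times BC(X)\to BC(X)$, $(g,A)\mapsto gA$, at an arbitrary point $(g_0,A_0)$; well-definedness of $\Phi$ is immediate since every $g\in G$ is boundedness preserving. The common device for all three topologies will be the triangle-type estimate
$$e(g_0A_0,gA)\le e(g_0A_0,g_0A)+e(g_0A,gA),$$
together with its symmetric counterpart $e(gA,g_0A_0)\le e(gA,g_0A)+e(g_0A,g_0A_0)$. The first summands will be controlled via continuity of $\widetilde{g_0}$ on the relevant hyperspace, which holds by Proposition~\ref{prop induced hausdorff} since $g_0$ is boundedness preserving and uniformly continuous on bounded sets. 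The second summands will be controlled via uniform closeness of $g$ to $g_0$ on the bounded set $B:=N_1(A_0)$.

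Given $\varepsilon>0$, the hypothesis $\tau_{u.c.b.}\subseteq\tau_G$ will imply that $U:=(B,g_0,\varepsilon/3)$ is an open neighborhood of $g_0$ in $G$, so $d(gx,g_0x)<\varepsilon/3$ for all $g\in U$ and $x\in B$. We will choose $\delta\in(0,1]$ small enough that simultaneously (a) the induced map $\widetilde{g_0}$ sends the $\delta$-ball of $A_0$ (in each of the three hyperspaces) into the $\varepsilon/3$-ball of $g_0A_0$, and (b) any two points $x,y\in B$ with $d(x,y)<\delta$ satisfy $d(g_0x,g_0y)<\varepsilon/3$; item (a) is possible by Proposition~\ref{prop induced hausdorff} and (b) by uniform continuity of $g_0$ on $B$.

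For the cases $BC_H$ and $BC_{H^+}$, the neighborhood $\{A:d_H(A_0,A)<\delta\}$ (respectively $\{A:d_{H^+}(A_0,A)<\delta\}$) will trap $A$ inside $N_\delta(A_0)\subseteq B$. Then for $g\in U$ we will obtain immediately
$$e(g_0A,gA)\le\sup_{a\in A}d(g_0a,ga)<\varepsilon/3\quad\text{and}\quad e(gA,g_0A)\le\sup_{a\in A}d(ga,g_0a)<\varepsilon/3,$$
and combining with (a) through the triangle inequalities will give $d_H(g_0A_0,gA)<\varepsilon$ (respectively $d_{H^+}<\varepsilon$), proving continuity of the action on these two hyperspaces.

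The main obstacle will be the $BC_{H^-}$ case, because the neighborhood $\{A\in BC(X):e(A_0,A)<\delta\}$ does not trap $A$ inside $B$. The workaround will be to argue pointwise: given $a_0\in A_0$, the inequality $e(A_0,A)<\delta$ yields some $a\in A$ with $d(a_0,a)<\delta\le 1$, so $a\in B$; then (b) gives $d(g_0a_0,g_0a)<\varepsilon/3$ and $g\in U$ gives $d(g_0a,ga)<\varepsilon/3$, whence
$$d(g_0a_0,gA)\le d(g_0a_0,ga)\le d(g_0a_0,g_0a)+d(g_0a,ga)<\tfrac{2\varepsilon}{3}.$$
Taking the supremum over $a_0\in A_0$ will yield $e(g_0A_0,gA)\le 2\varepsilon/3<\varepsilon$ and hence continuity of the action on $BC_{H^-}(X)$. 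Thus the hypothesis that every $g\in G$ be uniformly continuous on bounded sets is genuinely used beyond merely invoking Proposition~\ref{prop induced hausdorff}: it allows us to translate the purely lower information $e(A_0,A)<\delta$ into a concrete bound via a pointwise choice of approximating points in $B$.
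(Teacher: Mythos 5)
Your proposal is correct and follows essentially the same strategy as the paper: fix the bounded enlargement $N_r(A_0)$, use uniform continuity of $g_0$ there together with the $\tau_{u.c.b.}$-neighborhood $(N_r(A_0),g_0,\cdot)$ to control the two legs of a triangle inequality, with your $BC_{H^-}$ argument matching the paper's pointwise estimate almost verbatim. The only (cosmetic) difference is that for $BC_{H^+}$ and $BC_H$ you route the leg $e(g_0A_0,g_0A)$ through Proposition~\ref{prop induced hausdorff} and a set-level triangle inequality for the excess, whereas the paper repeats the pointwise estimate directly; both rest on the same ingredients.
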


\begin{proof}
Let $(g,A)\in G\times BC(X)$ and $\varepsilon >0$. Since $g$ is uniformly continuous on $N_{\varepsilon/2}(A)$, there exists $\delta>0$ such that if $a,a'\in N_{\varepsilon/2}(A)$ and $d(a,a')<\delta$ then $d(ga,ga')<\varepsilon/2$.

Consider $\varepsilon'=\min\{\varepsilon/2,\delta\}$ and let $$W=(N_{\varepsilon /2}(A),g,\varepsilon/2)\times B_{H^-}(A,\varepsilon').$$

Clearly, $W$ is a neighborhood of $(g,A)$ in the product space $G\times BC_{H^-}(X)$. Let $(h,B)\in W$ and $a\in A$. Since  $d_{H^-}(A, B)<\varepsilon'$, there exists $b\in B$ such that $d(a,b)<\varepsilon'$, so $b\in N_{\varepsilon/2}(A)$ and $d(a,b)<\delta$. Therefore $d(ga,gb)<\varepsilon /2$. Furthermore, since $b\in N_{\varepsilon/2}(A)$, by the definition of $W$ we have that $d(gb, hb)<\varepsilon/2$. Thus
$$d(ga,hb)\le d(ga,gb)+d(gb,hb)<\varepsilon/2+\varepsilon/2=\varepsilon.$$

This proves that $d_{H^-}(gA,hB)=\sup_{a\in A}d(ga,hB)<\varepsilon$, and hence the induced action of $G$ on $BC_{H^-}(X)$ is continuous.

In a similar way, we can prove that such action is continuous on $BC_{H^+}(X)$, and therefore it is continuous on $BC_H(X)$ as well.\end{proof}

Observe that the hypothesis  that every element of the group $G$ is boundedness preserving  (regarded as a homeomorphism from $X$ onto $X$),   is  a
necessary condition to guarantee that the action on $BC(X)$ is well defined.   On the other hand, the hypothesis of $g$ being uniformly continuous on bounded sets also implies that the induced map $\widetilde g$ is continuous (Proposition~\ref{prop induced hausdorff}). 

Finally, on the induced action on the hyperspaces $\cl_{H^-}(X)$, $\cl_{H^+}(X)$ and $\cl_H(X)$, we have the following result.

\begin{theorem}
\label{t: G en clH}
Let $(G, \tau_G)$ be a topological group and $X$ be a metric $G$-space. Assume that the homeomorphisms on $X$ induced by every $g\in G$ are uniformly continuous on $X$. If $\tau_G$ contains the topology of the uniform convergence, then the induced actions of $G$ on $\cl_{H^-}(X)$, $\cl_{H^+}(X)$ and $\cl_H(X)$ are continuous.
\end{theorem}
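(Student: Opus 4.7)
The plan is to follow the same strategy as in Theorem \ref{t:G en BCH}, with the hypotheses strengthened in exactly the two places where the bounded-set argument broke down. In the previous theorem the point $b$ approximating a fixed $a\in A$ was automatically confined to a bounded set (a small neighborhood of the bounded $A$), so uniform continuity on bounded sets and uniform convergence on bounded sets were enough. Here $A$ is an arbitrary closed set, so $b$ may be placed anywhere in $X$; to compensate, the theorem asks for uniform continuity on all of $X$ and for $\tau_G$ to contain the uniform convergence topology. With these stronger hypotheses the proof should essentially mirror the previous one.

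Concretely, I would fix $(g,A)\in G\times \cl(X)$ and $\varepsilon>0$. By the uniform continuity of $g$ on $X$, choose $\delta>0$ such that $d(x,y)<\delta$ implies $d(gx,gy)<\varepsilon/2$, and set $\varepsilon'=\min\{\varepsilon/2,\delta\}$. Consider the open neighborhood
$$W=(X,g,\varepsilon/2)\times B_{H^-}(A,\varepsilon')$$
of $(g,A)$ in $G\times \cl_{H^-}(X)$, where the first factor is the subbasic neighborhood of the uniform convergence topology (available in $\tau_G$ by hypothesis) and the second is the open ball in $\cl_{H^-}(X)$. For any $(h,B)\in W$ and $a\in A$, the inequality $d_{H^-}(A,B)<\varepsilon'$ furnishes some $b\in B$ with $d(a,b)<\varepsilon'\le\delta$. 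Uniform continuity of $g$ then gives $d(ga,gb)<\varepsilon/2$, and the membership $h\in (X,g,\varepsilon/2)$, applied at the point $b\in X$, gives $d(gb,hb)<\varepsilon/2$. The triangle inequality yields $d(ga,hb)<\varepsilon$, hence $d(ga,hB)<\varepsilon$; taking the supremum over $a\in A$ gives $d_{H^-}(gA,hB)\le \varepsilon$, which proves continuity of the induced action on $\cl_{H^-}(X)$.

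The argument for $\cl_{H^+}(X)$ is symmetric: given $(h,B)$ with $d_{H^+}(A,B)<\varepsilon'$ (so $d_{H^-}(B,A)<\varepsilon'$), every $b\in B$ admits $a\in A$ with $d(a,b)<\varepsilon'\leq \delta$, yielding $d(hb,ga)\le d(hb,gb)+d(gb,ga)<\varepsilon$ by the same two estimates, and therefore $d_{H^+}(gA,hB)\le \varepsilon$. Continuity on $\cl_H(X)$ is then immediate from $d_H=\max\{d_{H^-},d_{H^+}\}$.

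The main obstacle, such as it is, is simply the verification that the estimate $d(gb,hb)<\varepsilon/2$ holds for $b$ placed arbitrarily in $X$ — precisely the point where the earlier proof needed $b$ to lie in a bounded set. The hypothesis that $\tau_G$ contains the \emph{full} uniform convergence topology (not merely uniform convergence on bounded sets) is tailored exactly to deliver this estimate, so no genuinely new argument is required beyond recognizing why the two strengthened hypotheses suffice.
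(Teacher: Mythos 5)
Your proof is correct and is exactly the adaptation the paper has in mind: the paper omits this proof, noting it is ``very similar'' to that of Theorem~\ref{t:G en BCH}, and you reproduce that argument with the bounded neighborhood $(N_{\varepsilon/2}(A),g,\varepsilon/2)$ replaced by the global one $(X,g,\varepsilon/2)$ and uniform continuity on bounded sets replaced by uniform continuity on $X$. The lower, upper, and full Hausdorff cases are handled just as in the paper's template, so no further comment is needed.
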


Since the proof of Theorem~\ref{t: G en clH} is very similar to the one of Theorem~\ref{t:G en BCH}, we leave it as an easy exercise to the reader.

\medskip

\subsection{Group actions on  hyperspaces with the Attouch-Wets metric}

Let $(X,d)$ be a metric $G$-space. If we want to guarantee that each element   $g\in G$ induces a continuous function on $\cl_{AW}(X)$, the homeomorphism associated to $g$ must satisfy conditions (1) and (2) of Theorem~\ref{t:AWmain}. However, since each $g\in G$ determines a bijection from $X$ onto $X$, condition (2) holds if and only if every $g\in G$ is boundedness preserving. This in turn implies that condition (1) holds if and only if each element of $G$ is uniformly continuous on bounded sets.
In the last theorem we will see that these two conditions, in combination with the fact that $\tau_G$ contains the topology of uniform convergence on bounded sets, are sufficient to guarantee that the induced action on $\cl_{AW}(X)$ is continuous.

\begin{theorem}\label{t: G on AW}
Let $G$ be a topological group and  $(X,d)$  a metric $G$-space with the following properties.
\begin{enumerate}[\rm(1)]
    \item Each $g\in G$ determines a homeomorphism on $X$ which is boundedness preserving and uniformly continuous on bounded sets.
    \item The topology of the group $G$ contains the topology of the uniform convergence on bounded sets. 
\end{enumerate}
Then the induced action of $G$ on $\cl_{AW}(X)$ is continuous.
\end{theorem}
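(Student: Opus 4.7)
My plan is to verify joint continuity of the induced action at an arbitrary $(g_0, A_0) \in G \times \cl_{AW}(X)$. Fix $\varepsilon \in (0,1)$ and choose $j \in \mathbb{N}$ with $\tfrac{1}{j+1} < \varepsilon \leq \tfrac{1}{j}$; by Lemma~\ref{aw} it suffices to exhibit a neighbourhood of $(g_0, A_0)$ on which $\sup_{d(x, x_0) < j} |d(x, g_0 A_0) - d(x, gB)| < \varepsilon$. The natural triangle-inequality split
\[
|d(x, g_0 A_0) - d(x, gB)| \leq |d(x, g_0 A_0) - d(x, g_0 B)| + |d(x, g_0 B) - d(x, gB)|
\]
reduces the task to forcing each summand below $\varepsilon/2$ on $B(x_0, j)$. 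The first summand is immediate from Theorem~\ref{t:AWmain}: condition~(1) guarantees that $\widetilde{g_0}: \cl_{AW}(X) \to \cl_{AW}(X)$ is continuous, so there exists $\delta > 0$ such that $d_{AW}(A_0, B) < \delta$ implies $d_{AW}(g_0 A_0, g_0 B) < \varepsilon/2$, and (using Lemma~\ref{aw} for $\varepsilon/2$ and the associated $j' \geq j$) the first summand is $< \varepsilon/2$ on $B(x_0, j)$.

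The real work is in the second summand. My strategy is to \emph{localize}: show that the approximate minimizers that witness the infima $d(x, g_0 B)$ and $d(x, gB)$ lie in a \emph{fixed} bounded subset of $X$ depending only on $g_0, A_0, \varepsilon$, and then use condition~(2) to force $g$ uniformly close to $g_0$ on that subset.

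For the direction $d(x, gB) \leq d(x, g_0 B) + \varepsilon/2$, pick $b \in B$ with $d(x, g_0 b) < d(x, g_0 B) + \varepsilon/8$. The first-summand bound gives $d(x, g_0 B) \leq d(x, g_0 A_0) + \varepsilon/2 \leq d(x_0, g_0 A_0) + j + \varepsilon/2$, so $g_0 b \in B(x_0, R)$ for an $R$ depending only on $g_0, A_0, j, \varepsilon$. Since $g_0^{-1} \in G$ is boundedness preserving by condition~(1), $b \in g_0^{-1}(B(x_0, R)) \subset B(x_0, M_1)$ for some $M_1$. Condition~(2) supplies a $\tau_G$-neighbourhood $U_1$ of $g_0$ on which $d(gz, g_0 z) < \varepsilon/4$ for every $z \in B(x_0, M_1)$, and the triangle inequality closes this direction.

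The reverse direction $d(x, g_0 B) \leq d(x, gB) + \varepsilon/2$ is the main obstacle, and here the topological-group structure of $G$ enters essentially. An approximate minimizer $b' \in B$ of $d(x, g b')$ now only bounds $g b'$ in a fixed ball $B(x_0, R')$ (using the previous direction, valid on $U_1$), but not $b'$ itself, since $g$ varies. To bound $b'$ uniformly I exploit continuity of inversion in $(G, \tau_G)$: by condition~(2), $(B(x_0, R'), g_0^{-1}, 1)$ is a $\tau_G$-neighbourhood of $g_0^{-1}$, and its preimage under inversion is a $\tau_G$-neighbourhood $U_2$ of $g_0$. For $g \in U_2$, the point $b' = g^{-1}(g b')$ lies within distance $1$ of $g_0^{-1}(g b')$, which sits in the bounded set $g_0^{-1}(B(x_0, R'))$ (condition~(1) applied to $g_0^{-1}$); hence $b' \in B(x_0, M_2)$ for a fixed $M_2$. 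Applying condition~(2) once more produces a $\tau_G$-neighbourhood $U_3 \subset U_1 \cap U_2$ of $g_0$ with $d(gz, g_0 z) < \varepsilon/4$ on $B(x_0, \max\{M_1, M_2\})$, closing the estimate. Taking $U = U_3$ and $V = \{B \in \cl(X) : d_{AW}(A_0, B) < \delta\}$ and invoking Lemma~\ref{aw} yields $d_{AW}(g_0 A_0, g B) < \varepsilon$ for all $(g, B) \in U \times V$, completing the proof.
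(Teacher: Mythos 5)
Your proposal is correct and follows essentially the same route as the paper's proof: both decompose through the image of $B$ under the fixed group element (bounding $d_{AW}(g_0A_0,g_0B)$ via Theorem~\ref{t:AWmain}), localize the relevant points of $g_0B$ and $gB$ to a bounded set determined by $g_0$, $A_0$, $\varepsilon$, and then control the comparison with a $\tau_{u.c.b.}$-neighbourhood of $g_0$ together with a neighbourhood of $g_0^{-1}$ pulled back through the inversion (the paper's $O_1\cap O_2^{-1}$). Your use of approximate minimizers in place of the paper's Lemma~\ref{l:distancia igual a distancia en interseccion} is only a minor technical variation, not a different method.
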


\begin{proof}
Let us assume that the Attouch-Wets metric on $\cl(X)$ is based at the point $x_0\in X$.
Let $(g,A)\in G\times \cl(X)$ and $\varepsilon\in (0,1)$. Take $j\in\mathbb N$ and $L>0$ such that $\frac{1}{j+1}<\varepsilon/2\leq\frac{1}{j}$ and
\begin{equation}\label{e:L accion}
    L>d(x_0, gA)+2j+\varepsilon.
\end{equation}

Define $Q:=N_{\varepsilon/4}\left(g^{-1}B(x_0,L)\right)$. Since $g^{-1}$ is boundedness preserving, the set $Q$ is bounded and therefore $ O_1=\left(Q,g,\frac{\varepsilon}{4}\right)$ is a neighborhood of $g$ in  $\tau_{G}$.

On the other hand, for every $\varphi\in O_2:=\left(B(x_0, L), g^{-1}, \varepsilon/4 \right)$, the set $\varphi B(x_0,L)$ is  completely contained in $Q$.
Let $O:=O_1\cap O_2^{-1}$. By the continuity of the inversion on $G$, the set $O$ is a neighborhood of $g$ in $\tau_G$.
Since $g$ induces a continuous map on $\cl_{AW}(X)$ (Theorem~\ref{t:AWmain}), we can pick $\delta>0$ such that
$d_{AW}(gA,gB)<\varepsilon/4$
for every $B\in\cl(X)$ such that $d_{AW}(A,B)<\delta$.

Take $h\in O$ and $B\in \cl(X)$ with $d_{AW}(A, B)<\delta$. Using Lemma~\ref{aw} it is easy to verify that $d_{AW}(gA, gB)<\varepsilon/4$ implies that
\begin{equation}\label{e:distancia entre gB y gA}
   d(x_0, gB)-d(x_0, gA)<\varepsilon/4. 
\end{equation}
Thus $d(x_0, gB)+2j< d(x_0, gA)+\varepsilon/4+2j<L$. Hence we can apply Lemma~\ref{l:distancia igual a distancia en interseccion} to conclude that
\begin{equation}\label{e:gB}
    d(x, gB)=d\big(x, gB\cap B(x_0, L)\big)\;\text{ for every }x\in B(x_0, j).
\end{equation}
Pick  $gb\in gB\cap B(x_0, L)$ with the property that
$$d(x_0, gb)<d\big(x_0, gB\cap B(x_0,L)\big)+\varepsilon/4=d(x_0, gB)+\varepsilon/4.$$
Since $gb\in B(x_0, L)$, we infer that $b\in Q$. Thus,
$d(gb, hb)<\varepsilon/4$ and therefore, by inequality~(\ref{e:distancia entre gB y gA}),
\begin{align*}
 d(x_0, hB)&\leq d(x_0, hb)\leq d(x_0, gb)+d(gb, hb)\\
 &< d(x_0, gB)+\varepsilon/4+\varepsilon/4
 = d(x_0, gB)+\varepsilon/2\\
 &\leq  d(x_0, gA)+\varepsilon/4+\varepsilon/2< d(x_0, gA)+\varepsilon.
\end{align*}
We infer from the previous inequality that $d(x_0, hB)+2j<d(x_0, gA)+\varepsilon+2j<L$ and hence we can apply Lemma~\ref{l:distancia igual a distancia en interseccion} again to obtain
\begin{equation}\label{e:hB}
  d(x, hB)=d\big(x, hB\cap B(x_0, L)\big)\;\text{ for every }x\in B(x_0, j).
\end{equation}

Let $x\in B(x_0, j)$ and $hy\in hB\cap B(x_0, L)$.
Then $y\in B\cap Q$, $gy\in gB$ and $d(gy, hy)<\varepsilon/4$. Therefore
\begin{align*}
    d(x,gB)&\leq d(x,hy)+d(hy, gB)\\
    &\leq d(x,hy)+d(hy,gy)\\
    &< d(x,hy)+\varepsilon/4.
\end{align*}
Since $hy\in hB\cap B(x_0, L)$ was taken arbitrarily, by equality~\ref{e:hB} we conclude that
$$d(x,gB)\leq d\big(x, hB\cap B(x_0, L)\big)+\varepsilon/4=d(x, hB)+\varepsilon/4.$$
Thus $d(x,gB)-d(x,hB)\leq\varepsilon/4$. Analogously we can prove that $d(x,hB)-d(x,gB)\leq\varepsilon/4$ and therefore
$$|d(x,gB)-d(x,hB)|\leq \varepsilon/4<\varepsilon/2\;\text{ for every }x\in B(x_0,j).$$
The previous inequality in combination with Lemma~\ref{aw} implies that 
$d_{AW}(gB, hB)<\varepsilon/2$ and therefore
$$d_{AW}(gA, hB)\leq d_{AW}(gA, gB)+d_{AW}(gB, hB)<\varepsilon/4+\varepsilon/2<\varepsilon.$$
This completes the proof. 

 \end{proof} 
\subsection{Final questions}

In \cite[Theorem 4.2]{concilio}, Di Concilio characterizes whether $\ho(X)$, equipped with the topology of the uniform convergence on a certain family of sets,  becomes  a topological group. In the particular case of the  topology of the uniform convergence on bounded sets,  \cite[Theorem 4.2]{concilio} implies that for a metric space $(X,d)$, the topology $\tau_{u.c.b.}$  on $\ho(X)$ is a group topology if and only if the evaluation map $E:\ho(X)\times\cl(X)\to\cl(X)$ is continuous, provided that $\ho(X)$ is endowed with   $\tau_{u.c.b.}$, and $\cl(X)$ is equipped with  the hit and far miss topology on $\cl(X)$ generated by $\tau_{V^-}$ and the sets of the form
$$(X\setminus B)^{++}=\{A\in\cl(X)\mid \exists \varepsilon>0\text{ such that }N_{\varepsilon}(A)\subset X\setminus B\}$$
where $B\in BC(X)$.
Since, in general,  this topology does not coincide with the Hausdorff metric topology nor the Attouch-Wets metric topology, it is natural to ask if the condition on theorems~\ref{t:G en BCH} and \ref{t: G on AW} of $\tau_{G}$ being stronger than $\tau_{u.c.b}$ is necessary. Thus, we finish the paper with the following two questions.

\begin{question}
Let $X$ be a metric $G$, where each element of $G$ is uniformly continuous on bounded sets and boundedness preserving. If the induced action of $G$  on $BC_{H}(X)$ (or $\cl_{AW}(X)$) is continuous, does $\tau_G$ contain the topology of the uniform convergence on bounded sets?
\end{question}

\begin{question}
Let $X$ be a metric $G$-space, where each element of $G$ is uniformly continuous. If the induced action of $G$  on $\cl_{H}(X)$  is continuous, does $\tau_G$ contain the topology of the uniform convergence?
\end{question}

\begin{center}
ACKNOWLEDGMENTS
\end{center}

We wish to  thank  the anonymous  referee for the constructive comments and recommendations
which definitely  improved the  quality of this paper.


\begin{thebibliography}{99}


\bibitem{S1} S.A. Antonyan, The topology of the Banach–Mazur compactum, Fundam. Math. 166 (3) (2000) 209–232.


\bibitem{S2} S.A. Antonyan, \textit{West’s problem on equivariant hyperspaces and Banach–Mazur compacta}, Trans. Am. Math. Soc. 355 (8)
(2003) 3379–3404.


\bibitem{S3} S.A. Antonyan, \textit{Extending equivariant maps into spaces with convex structures}, Topol. Appl. 153 (2005) 261–275.


\bibitem{SNS} S.A. Antonyan, N. Jonard-Pérez, S. Juárez-Ordóñez, \textit{Hyperspaces of Keller compacta and their orbit spaces}, J. Math.
Anal. Appl. 412 (2) (2014) 613–619.


\bibitem{natalia sergey} S. Antonyan, N. Jonard-Pérez, {\it Affine group acting on hyperspaces of compact convex subsets of $\mathbb{R}^n$}, Fundamenta Mathematicae 223 (2013), 99-136.

\bibitem{arens} R. Arens, {\it Topologies for homeomorphisms groups}, American Journal of Mathematics, 68, No. 4 (1946) 593-610.


\bibitem{banks} J. Banks, \textit{Chaos for induced hyperspace maps}, Chaos, Solitons \& Fractals, 25 (2005) 681–685.

\bibitem{beer book} G. Beer, {\it Topologies on Closed and Closed Convex Sets}, MAIA 268, Kluwer Acad. Publ., 1993.

\bibitem{beer di Concilio} G. Beer and A. Diconcilio {\it Uniform continuity on bounded sets and the Attouch-Wets topology}, Proceedings of the American Mathematical Society, Vol. 112, No. 1 (1991),  235-243.

\bibitem{BERTHIAUME} G. Berthiaume, {\it 
On quasi-uniformities in hyperspaces}, Proc Am Math Soc, Vol. 66 No. 2 (1977) 335-343.

\bibitem{Bredon} G. E. Bredon, \emph{Introduction to Compact Transformation Groups}, Academic Press, New York, 1972.



\bibitem{concilio} A. Di Concilio, {\it Action on hyperspaces}, Topology Proceedings 41 (2013) 1-14.




\bibitem{engelking} R. Engelking, \textit{General Topology} Heldermann Verlag, Berlin, 1989.

\bibitem{Grumbaum} B. Grünbaum, \textit{Measures of symmetry for convex sets}, Proc. Symp. Pure Math. 7 (1963) 233–270.

\bibitem{Hitchcock} G. Hitchcock, {\it Topologies on uniform hyperspaces}, Quaestiones Mathematicae 29 (2006) 299–311.



\bibitem{Jonard 2014} N. Jonard-P\'erez, {\it Equivariant absolute extensor property on hyperspaces of convex sets}, Topology and its Applications 177 (2014) 88–96.



\bibitem{Jonard accepted} N. Jonard-P\'erez, \textit{
Some generalizations on affine invariant points}, Proc. Am. Math. Soc. 148, No. 12 (2020) 5299–5311.

\bibitem{Kuchment} P. Kuchment, \textit{On a problem concerning affine-invariant points of convex bodies}, Optimizaciya 8 (1972), 48-51 (in Russian).

\bibitem{Kuchment E} P. Kuchment, \textit{On a problem concerning affine-invariant points of convex bodies}, arXiv:1602.04377, Feb. 2016 (English translation of \cite{Kuchment}).

\bibitem{michael}E. Michael, {\it Topologies on spaces of subsets}, Trans. Amer. Math. Soc. 71 (1951) 152-182.



\bibitem{nadler} S. B. Nadler, {\it Hyperspace of Sets: A Text with Research Questions}, Monographs and Textbooks in Pure and Applied Mathematics,  M. Dekker, 1978.




\bibitem{Repovs-Semenov} 
D. Repov\v{s}, P. V. Semenov, \textit{ Continuous Selections of Multivalued Mappings}, Vol. 455. Springer Science and Business Media, 2013.

\bibitem{Resende-Santos}P. Resende, J.P. Santos, \textit{Open quotients of trivial vector bundles}, Topology and Applications 224 (2017) 19-47.

\bibitem{Rodriguez-Romaguera}
J. Rodr\'iguez-L\'opez and S. Romaguera, {\it The relationship between the Vietoris topology and the Hausdorff quasi-uniformity}, Topology and its Applications, 124 (2002) 451-464.


\bibitem{sakai yang} K. Sakai, Z. Yang, {\it The spaces of closed convex sets in euclidean spaces with the Fell topology}, Bulletin of The Polish Academy of Sciences Mathematics 55 (2007) 139-143.

\bibitem{Sakai-Yaguchi} K. Sakai and M. Yaguchi, \textit{The AR-property of the spaces of closed convex sets}, Colloquium Mathematicum, Vol. 106, No. 1 (2006), 15-24.





\bibitem{wang}
Y. Wang, G. Wei, W. H. Campbell, and S. Bourquin, \textit{A framework of induced hyperspace dynamical systems equipped with the hit-or-miss topology}, Chaos, Solitons and Fractals 41, no. 4 (2009) 1708-1717.

 \bibitem{West} J. E. West, \textit{Induced involutions on Hilbert cube hyperspaces}, Topology Proc. 1 (1976) 281–
293. 

\bibitem{Wicks} R. K. Wicks, \textit{Topologicality of groups of homeomorphisms}, unpublished manu- script 1–8 (1994).
\end{thebibliography}
\end{document}